\documentclass{siamart251216}
\usepackage{silence}
\usepackage{amssymb}
\usepackage{caption}
\usepackage{subcaption}
\usepackage{tikz}
\usetikzlibrary{quotes}
\def\R{\mathbb{R}}
\renewcommand{\hat}{\widehat}
\renewcommand{\bar}{\overline}
\renewcommand{\phi}{\varphi}
\renewcommand{\epsilon}{\varepsilon}
\newsiamthm{thm}{Theorem}
\newsiamthm{prop}{Proposition}
\newsiamthm{lem}{Lemma}
\newsiamthm{cor}{Corollary}
\newsiamthm{conjecture}{Conjecture}
\newsiamthm{defn}{Definition}
\newsiamremark{rem}{Remark}
\headers{Analysis of a model of bacterial persister cells}{C. Li, T. Meadows, and T. Day}
\title{Analysis of persistence thresholds for a nonlocal PDE--ODE model of bacterial persister cells}
\author{
  Chongming Li\thanks{Queen's University, Department of Mathematics and Statistics
    (\email{li.chongming@queensu.ca}).}
  \and
  Tyler Meadows\thanks{Queen's University, Department of Mathematics and Statistics
    (\email{tyler.meadows@queensu.ca}).}
  \and
  Troy Day\thanks{Queen's University, Department of Mathematics and Statistics
    (\email{day@queensu.ca}).
    Also affiliated with Queen's University, Department of Biology.}
}
\begin{document}
\maketitle
\begin{abstract}
  Within many bacterial colonies, persister cells exist as a subpopulation that is tolerant to antibiotics and other stressors, yet not genetically distinct from the rest of the colony. A recent study has proposed epigenetic inheritance as a mechanism that leads to the presence of persister cells. We analyze a nonlocal PDE--ODE model introduced in that study to describe the epigenetic inheritance process and establish its mathematical well-posedness, including existence, uniqueness, and nonnegativity of solutions. We identify a sharp parameter threshold delineating extinction from persistence of the colony: below this threshold the washout equilibrium is globally asymptotically stable, while above it a unique positive equilibrium exists and the population is weakly persistent. Notably, this threshold is independent of the internal community structure.
\end{abstract}
\begin{keywords}
  bacterial persister cells, epigenetic inheritance, nonlocal PDE, positive and irreducible semigroup, spectral threshold, washout equilibrium, weak persistence
\end{keywords}
\begin{AMS}
  35K57, 92D25, 47D06, 35R09
\end{AMS}

\section{Introduction}
\label{sec:Introduction}

Bacterial persister cells are a subpopulation of some bacterial colonies, exhibiting resistance to certain stressors such as antibiotics and starvation. Persister cells were first discovered in 1944 by Joseph Bigger \cite{Bigger1944} during his experiments with penicillin. After treating a colony of bacteria with the drug, a small number of cells would survive. New colonies grown using the surviving cells were just as susceptible to antibiotics as the original colony.

Persister cells survive the application of antibiotics and other stressors by entering a dormant, or quiescent, state. Since most \(\beta\)-lactam antibiotics, like penicillin, act by disrupting cell wall formation during cell division, avoiding replication entirely is an effective strategy for surviving antibiotics. Similarly, shutting down most cellular functions serves as an effective survival strategy during times of starvation, oxidative stress, and other environmental stresses. 

Biological studies \cite{kim2007,moyed1983,kim2001,wolf1989} have shown that bacterial persistence is a phenotypic state rather than genetic adaptation; persister cells are genetically identical to the original wild-type bacteria, but differ in certain quantitative traits that enhance survival under stress. In mathematical models, these phenotypic traits are often represented by expression levels of particular chemicals, and it is assumed that bacteria switch to the persister state when the expression level exceeds a certain threshold~\cite{day2016interpreting}. 

Modelling persister cells via PDEs in the bacterial literature is relatively new as the existing models consider the interaction between persister and susceptible states of bacteria using ODEs (see \cite{balaban2004bacterial}). The advantage of PDE models is the feasibility of tracking the exchange between subpopulations indexed by a continuum of expression levels, together with the dormant persister cells, at any given time. Furthermore, Day in \cite{day2016interpreting} incorporated the birth-jump process via a nonlocal term. In spatial ecology, the term birth-jump process \cite{hill2015} is often used to describe situations in which newly produced offspring disperse from their original location immediately after birth. In our model, the change in phenotypes between generations can be interpreted, by analogy, as a birth-jump process.

We consider a model introduced in  \cite{day2016interpreting} consisting of an ODE coupled to a PDE with a nonlocal birth-jump term:
\begin{subequations}\label{eq:semipde1}
\begin{align}
    \partial_t n &=\chi_{\alpha}[bR-d]n-\mu R nb\chi_{\alpha}+\mu bR\int^{1}_{0} n(y,t)\chi_{\alpha}(y)p(x;y) dy-\partial_x[v(x)n]+m\partial_{xx}n,\label{eq:semipde1a}\\
    \partial_tR&=\theta-\eta R-bR\int^{1}_{0} \chi_{\alpha} n dx. \label{eq:semipde1b}
\end{align}
\end{subequations}
In system \eqref{eq:semipde1}, we denote by \(n:=n(x,t)\) and \(R:=R(t)\), respectively, the density of individuals with expression level \(x\in \Omega = (0,1)\) (normalized to the unit interval) and the amount of resource present at time \(t\in \R_{\ge 0}\).

The first term of \eqref{eq:semipde1a}, \(\chi_{\alpha}(x)[bR(t)-d]\), represents the net per capita reproduction rate. We suppose that the birth rate is proportional to available resource \(R(t)\), while the death rate is a fixed constant \(d\). The function 
\[\chi_\alpha:=\chi_\alpha(x)=\begin{cases} 1 & x<\alpha\\ 0 &x \ge \alpha\end{cases}\]
delineates between regular cells and persister cells; cells with expression levels above \(\alpha\) are dormant and hence do not replicate or die, while cells with expression levels below \(\alpha\) behave normally.

The second and third terms of \eqref{eq:semipde1a} are related to changes in expression levels across generations during the reproduction process. By the biological interpretation in \cite{day2016interpreting,sat2011,vee2008}, the expression level can be transferred from parents to offspring during reproduction. We denote by \(\mu\) the fixed probability of a change in expression level when a parent gives birth to an offspring. Inside the integral, \(p(x;y)\) is the probability density of the offspring's expression level being \(x\), given that the parent has expression level \(y\), and assuming that a change takes place during the process of reproduction.

The final two terms of \eqref{eq:semipde1a} 
are advection and diffusion terms describing within-generation change. The advection term is based on a mechanism of cellular homeostasis \cite{lib2023} while the diffusion represents within-generation noise. Cellular homeostasis arises from microbes regulating their internal environment through physiological processes. The expression level of an individual is typically centered around a value that supports replication. This behavior is captured by choosing \(v(x)\) to model within-generation adjustment. 

For the resource equation \eqref{eq:semipde1b}, the parameter \(\theta\) is a resource inflow rate whereas \(\eta\) is the per capita loss rate of the resource. The integral \(bR\int^{1}_{0} \chi_{\alpha}(x) n dx\) is the amount of resource consumed by the whole population of microbes per unit time. 

Given the above biological description, we make the following assumptions:
\begin{enumerate}
    \item [\(F_{1}:\)] \(v\in C^{1}(\overline{\Omega})\) with \(v(0)=v(1)=0\), and there exists a unique point \(\gamma\in(0,1)\) with \(v(\gamma)=0\), called the \emph{homeostatic point}. A concrete example is the cubic \(v(x)=v_{0}\,x(x-\gamma)(x-1)\) with \(v_{0}\in\mathbb{R}\).
    \item [\(F_{2}:\)] \(p\in L^{2}(\Omega\times\Omega)\) with \(p\ge 0\) a.e.\ and \(\int_0^1 p(x;y)\,dx=1\) for a.e.\ \(y\in\Omega\).
    \item [\(F_{3}:\)] \(b,d,\theta,\eta,m>0\) and \(\mu\in[0,1]\).
\end{enumerate}
Assumptions \(F_1\)--\(F_3\) are in force throughout the paper and will not be repeated in the statements of our results.

Lastly, we assume the following boundary and initial conditions:
\begin{align}
\label{eq:semiinicon}
    \partial_x n(0,t)=\partial_x n(1,t) = 0\quad\text{and}\quad n(x,0)=n_{0}(x)\in L^{2}_{+}(\Omega),\quad R(0)=R_{0}\in \R_{\ge 0}.
\end{align}

The main objective of this paper is to conduct a rigorous mathematical analysis of the nonlocal PDE–ODE system \eqref{eq:semipde1}–\eqref{eq:semiinicon}. The system shares the resource-consumer coupling of the classical chemostat \cite{SmithWaltman1995}, for which the dichotomy between extinction and persistence of the microbial population is well understood. Here we establish the analogous threshold for the structured model \eqref{eq:semipde1}, where the discontinuous coefficient \(\chi_\alpha\), the nonlocal birth-jump operator, and the advection term with degenerate boundary coefficients preclude classical solutions around \(x=\alpha\). For this reason, we formulate the PDE component of \eqref{eq:semipde1} in the framework of \emph{mild solutions} and \(C_0\)-semigroup theory. This approach allows us to treat the linear diffusion-advection-reaction operator as the generator of a positive \(C_0\)-semigroup and to incorporate the nonlinear and nonlocal terms via the variation of constants formula.

Our main results are as follows. We prove that system \eqref{eq:semipde1}--\eqref{eq:semiinicon} is globally well-posed for nonnegative initial data, with solutions remaining nonnegative for all time (Theorem~\ref{thm:global-solution}). The asymptotic behavior is governed by a sharp threshold: when \(\theta/\eta<d/b\), the washout equilibrium \((0,\theta/\eta)\) is the unique nonnegative steady state and is globally asymptotically stable (Theorem~\ref{thm:washout}); when \(\theta/\eta>d/b\), a unique positive equilibrium exists (Theorem~\ref{thm:positive-equilibrium}) and the population is weakly persistent (Theorem~\ref{thm:weak-persistence}). Notably, this threshold depends only on the resource and demographic parameters and is independent of the parameters governing the internal phenotypic dynamics (see Remark~\ref{rem:threshold-independence}).

The paper is organized as follows: in Section \ref{sec:WellPosedness}, we investigate the well-posedness of system \eqref{eq:semipde1} by reframing the equations in a way that permits the use of semigroup theory. In Section \ref{sec:AsymptoticBehavior} we analyze the long-term behavior of solutions using spectral theory and adjoint duality arguments. We show that for values of \(\theta/\eta\) large enough, the system is weakly persistent; the total microbial biomass does not decay to zero as time tends to infinity. Finally, in Section \ref{sec:Numerics}, we illustrate our findings with numerical simulations, which help support the conjecture that the positive steady state is locally asymptotically stable when the system is weakly persistent.

\section{Well-Posedness of the Model} \label{sec:WellPosedness}

Since \(\chi_\alpha(x)\) is discontinuous, we work in an \(L^2(\Omega)\) framework and formulate solutions in the mild sense using semigroup theory. For \(T>0\), define the Banach spaces 
\[
\mathcal{H}_T:=C([0,T],L^2(\Omega)),\quad \text{and}\quad \mathcal{P}_T:=C[0, T],  
\]
with norms
\[
    \|n\|_{\mathcal H_T} := \sup_{t\in[0,T]}\|n(t)\|, \qquad
    \|R\|_{\mathcal P_T} := \sup_{t\in[0,T]}|R(t)|, 
\]
respectively. Throughout, \(\|\cdot\|\) and \(\langle\cdot,\cdot\rangle\) denote respectively the norm and inner product on \(L^2(\Omega)\); other norms carry explicit subscripts. For fixed \(R(t)\in \mathcal{P}_T\), equation \eqref{eq:semipde1a} can be seen as a non-autonomous, nonlocal parabolic PDE. The right hand side of \eqref{eq:semipde1a} may be split into an autonomous part,
    \begin{equation}
    \label{defA}
        \mathcal{A}n:=-d\,\chi_\alpha n-\partial_{x}[v(x)n]+m\partial_{xx}n, 
    \end{equation}
with domain 
\[
\mathcal{D}(\mathcal{A}) = \{ n \in H^2(\Omega) : \partial_x n(0) = \partial_x n(1) = 0 \}, 
\]
and a non-autonomous part \(bR(t)\mathcal{N}n\), where 
    \begin{equation}
    \label{defNon}
        \mathcal{N}n:=(1-\mu)\chi_\alpha n+\mu\int^{1}_{0} \chi_\alpha(y)n(y)p(x;y)\,dy, 
    \end{equation}
with domain \(\mathcal{D}(\mathcal{N}) = L^2(\Omega)\).

We begin with the semigroup properties of the operator \(\mathcal{A}\). 
\begin{lem}
\label{lem:PropertiesA}
The operator \(\mathcal{A}\) given in \eqref{defA} generates a quasi-contraction semigroup, that is, there is a constant \(\omega\in \mathbb{R}\) such that
\begin{equation}\label{eq:semigroup-estimate}
    \|e^{\mathcal A t }n\|
    \le e^{\omega t}\|n\|
\end{equation}
for all \(t>0\) and all \(n\in L^2(\Omega).\)
\end{lem}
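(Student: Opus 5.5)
The plan is to apply the Lumer--Phillips theorem to the shifted operator \(\mathcal A-\omega I\) on \(L^2(\Omega)\). We show that it is dissipative, and that \(\lambda I-\mathcal A\) is surjective for some (hence all) large \(\lambda>\omega\). Then \(\mathcal A-\omega I\) generates a contraction semigroup, so \(e^{\mathcal A t}=e^{\omega t}e^{(\mathcal A-\omega I)t}\) satisfies \eqref{eq:semigroup-estimate}. Dense domain is immediate, since \(\mathcal D(\mathcal A)\) contains \(C_c^\infty(\Omega)\).

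The first step is quasi-dissipativity. For \(n\in\mathcal D(\mathcal A)\), compute \(\langle \mathcal A n,n\rangle\) term by term. The reaction term gives \(-d\int_0^1\chi_\alpha n^2\,dx\le 0\). The diffusion term, after integrating by parts with the Neumann conditions, gives \(-m\|\partial_x n\|^2\). For the advection term, write \(\partial_x(vn)n=v'n^2+vn\,\partial_x n\) and \(vn\,\partial_xn=\tfrac12 v\,\partial_x(n^2)\). Integrating by parts, the boundary terms vanish because \(v(0)=v(1)=0\) by \(F_1\), leaving
\[
-\int_0^1 \partial_x(vn)\,n\,dx=-\tfrac12\int_0^1 v' n^2\,dx\le \tfrac12\|v'\|_{L^\infty}\|n\|^2 .
\]
Hence \(\langle \mathcal A n,n\rangle\le \omega\|n\|^2\) with \(\omega:=\tfrac12\|v'\|_{L^\infty(\Omega)}\), which is finite since \(v\in C^1(\bar\Omega)\). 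Alternatively, one could keep part of the diffusion and use Young's inequality on \(\int v n\,\partial_x n\), but the exact integration by parts is cleaner.

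The second step, the range condition, is where the real work lies. Given \(f\in L^2(\Omega)\) and \(\lambda\) large, we must solve \(\lambda n-\mathcal A n=f\) with \(n\in H^2\) and Neumann conditions. We would use the bilinear form
\[
a(n,\phi)=\lambda\langle n,\phi\rangle+d\langle\chi_\alpha n,\phi\rangle+m\langle \partial_x n,\partial_x\phi\rangle-\langle vn,\partial_x\phi\rangle
\]
on \(H^1(\Omega)\). Here the boundary term from the advection vanishes again because \(v(0)=v(1)=0\), so no boundary term appears. By Young's inequality, \(|\langle vn,\partial_x\phi\rangle|\le \tfrac m2\|\partial_x \phi\|^2+C\|n\|^2\) when \(\phi=n\), so \(a\) is coercive on \(H^1\) for \(\lambda>C\). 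The form is clearly bounded, so Lax--Milgram gives a unique weak solution \(n\in H^1\). For regularity, the weak equation says \(m\,\partial_x n - vn\) has weak derivative equal to \(\lambda n+d\chi_\alpha n - f\in L^2\). Since \(vn\in H^1\) (because \(v\in C^1\)), it follows that \(\partial_x n\in H^1\), so \(n\in H^2\). This is a one-dimensional argument, and the discontinuity of \(\chi_\alpha\) is harmless because it enters only through an \(L^\infty\) zero-order term. Finally, testing with arbitrary \(\phi\in H^1\) recovers the natural boundary conditions \(\partial_x n(0)=\partial_x n(1)=0\), so \(n\in\mathcal D(\mathcal A)\).

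Combining the two steps, Lumer--Phillips yields that \(\mathcal A-\omega I\) generates a contraction semigroup, and the estimate follows. The main care needed is in the range step: extracting the Neumann condition from the variational formulation, and checking that the advection term produces no boundary contribution, which relies on \(v(0)=v(1)=0\).
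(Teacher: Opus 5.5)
Your proposal is correct and follows essentially the same route as the paper: Lumer--Phillips (the paper cites the quasi-contraction version, Hillen's Theorem 9.5.2), with quasi-dissipativity plus a Lax--Milgram and elliptic-regularity proof of the range condition. The differences are minor. Your exact integration by parts of the advection term gives \(\omega=\frac{1}{2}\|v'\|_{L^\infty}\), independent of \(m\), whereas the paper uses Young's inequality to get \(\omega=\|v\|_{L^\infty}^2/(2m)\) and keeps a \(\frac{m}{2}\|n_x\|^2\) term that it reuses for coercivity. You also carry out the one-dimensional regularity and the recovery of the Neumann condition (from \(m n_x - vn = 0\) at the endpoints, using \(v(0)=v(1)=0\)) explicitly, where the paper cites elliptic regularity.
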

\begin{proof}
To show that \(\mathcal{A}\) generates a quasi-contraction semigroup, we verify the three conditions of \cite[Theorem 9.5.2]{hillen2023}. The domain \(\mathcal{D}(\mathcal A)\) is dense in \(L^2(\Omega)\) because \(C_c^\infty(\Omega)\subset\mathcal{D}(\mathcal A)\) and \(C_c^\infty(\Omega)\) is dense in \(L^2(\Omega)\).

For quasi-dissipativity, we show that \(\operatorname{Re}\langle n, \mathcal{A}n\rangle \le \omega \|n\|^2\) for all \(n \in \mathcal{D}(\mathcal{A})\). By assumption \(F_1\) and integration by parts, the bilinear form associated with \(-\mathcal{A}\) on \(V=H^1(\Omega)\) is
\begin{align}
\label{eq:bilinear-form-a}
a(n,w)=\int_{0}^{1}\bigl[m\,\partial_{x}n\,\partial_{x}w+v(x)\,\partial_{x}n\;w+\bigl(v'(x)+\chi_{\alpha}(x)d\bigr)\,n\,w\bigr]\,dx.
\end{align}
Setting \(w=n\),
\begin{align*}
-\langle n, \mathcal{A}n \rangle &= -\int^{1}_{0} n \left( -\chi_{\alpha} d n - (vn)_x + m n_{xx} \right) dx \\
&= \int^{1}_{0}\left[\chi_{\alpha} d n^{2} - n_{x}v n + m |n_{x}|^{2}\right] dx.
\end{align*}
Applying Young's inequality to the advection term \(\int^{1}_{0} n_x v n\),
\begin{align*}
-\langle n, \mathcal{A}n \rangle &\ge d \int \chi_\alpha n^2dx - \|v\|_{L^\infty} \int |n| |n_x|dx + m \|n_x\|^2 \\
&\ge -\|v\|_{L^\infty} \left( \frac{\|n\|^2}{2\varepsilon} + \frac{\varepsilon \|n_x\|^2}{2} \right) + m \|n_x\|^2 \\
&= -\frac{\|v\|_{L^\infty}}{2\varepsilon} \|n\|^2 + \left(m-\frac{\varepsilon\|v\|_{L^\infty}}{2}\right) \|n_{x}\|^{2}.
\end{align*}
Letting \(\varepsilon = \frac{m}{\|v\|_{L^\infty}}\), the coefficient of \(\|n_x\|^2\) becomes \(m/2\), and we obtain
\begin{align}
\label{eq:<n,An>-inequality}
-\langle n, \mathcal{A}n \rangle \ge -\frac{\|v\|_{L^\infty}^2}{2m} \|n\|^2 + \frac{m}{2} \|n_x\|^2.
\end{align}
Rearranging gives \(\langle n, \mathcal{A}n \rangle \le \frac{\|v\|_{L^\infty}^2}{2m} \|n\|^2 - \frac{m}{2}\|n_x\|^2\). Defining \(\omega = \frac{\|v\|_{L^\infty}^2}{2m}\),
\[
\operatorname{Re}\langle n, \mathcal{A}n\rangle \le \omega\|n\|^2.
\]

It remains to verify the range condition. For \(\lambda > \omega\) and given \(f \in L^2(\Omega)\), consider the equation
\begin{align}
\label{eq:resolvent}
\lambda n - \mathcal{A}n = f.
\end{align}
The associated bilinear form \(B\, \colon H^1 \times H^1 \to \mathbb{R}\) is
\[
B[n,\phi] = \int_0^1 (\lambda n \phi - \phi \mathcal{A}n) dx = \int_0^1 \left[ (\lambda + d\chi_\alpha) n \phi + (vn)_x \phi + m n_x \phi_x \right] dx.
\]
Using the preceding estimate \eqref{eq:<n,An>-inequality}, for any \(n \in H^1(\Omega)\),
\[
B[n,n] = \lambda \|n\|^2 - \langle n, \mathcal{A}n \rangle \ge \left(\lambda - \frac{\|v\|_{L^\infty}^2}{2m}\right) \|n\|^2 + \frac{m}{2} \|n_x\|^2.
\]
For \(\lambda > \omega = \frac{\|v\|_{L^\infty}^2}{2m}\), there exists \(C>0\) such that \(B[n,n] \ge C \|n\|_{H^1}^2\), and \(B\) is coercive and bounded. Hence equation \eqref{eq:resolvent} has a unique weak solution \(n \in H^1(\Omega)\) by the Lax-Milgram Theorem \cite[Theorem~6.42]{san2022}. It follows that \(n \in H^2(\Omega)\) with the Neumann boundary conditions by elliptic regularity \cite[Theorem~8.30]{san2022}, hence \(n \in \mathcal{D}(\mathcal{A})\) and the range of \((\lambda - \mathcal{A})\) is all of \(L^2(\Omega)\).

All hypotheses of \cite[Theorem~9.5.2]{hillen2023} are satisfied, and therefore \(\mathcal{A}\) generates a quasi-contraction semigroup on \(L^2(\Omega)\).
\end{proof}

The semigroup \((e^{\mathcal{A}t})_{t\ge 0}\) has two further properties that will be used throughout.

\begin{lem}[Positivity and irreducibility of \(e^{\mathcal{A}t}\)]
\label{lem:Semigroup-A-Positive-Irreducible}
The semigroup \((e^{\mathcal{A}t})_{t\ge 0}\) generated by \(\mathcal{A}\) on \(L^2(\Omega)\) is positive and irreducible.
\end{lem}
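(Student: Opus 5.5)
We will use the standard form-method criteria for positivity and irreducibility of semigroups associated with closed, accretive (after shift), densely defined bilinear forms on \(L^2(\Omega)\), namely the Beurling--Deny/Ouhabaz criteria. The proof of Lemma~\ref{lem:PropertiesA} shows that \(e^{\mathcal A t}\) is the semigroup associated with the form \(a\) in \eqref{eq:bilinear-form-a} on \(V=H^1(\Omega)\). Indeed, \(a\) is continuous on \(H^1\times H^1\), and \(a+\omega'\) is coercive for \(\omega'>\omega\), by \eqref{eq:<n,An>-inequality}. The Neumann conditions are natural boundary conditions and do not appear in \(V\). We may therefore work with \(a\) rather than with \(\mathcal A\) directly.

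\textbf{Positivity.} By Ouhabaz's criterion, the semigroup is positive if and only if \(n\in V\) implies \(n^+\in V\) and \(a(n^+,n^-)\le 0\), where \(n=n^+-n^-\). Since \(n^\pm\in H^1(\Omega)\) with \(\partial_x n^+=\chi_{\{n>0\}}\partial_x n\) and \(\partial_x n^-=-\chi_{\{n<0\}}\partial_x n\), every integrand in \(a(n^+,n^-)\) contains a product of a function supported in \(\{n>0\}\) with one supported in \(\{n<0\}\). These sets are disjoint up to null sets, so \(a(n^+,n^-)=0\). An alternative route would be to avoid the form method: take the resolvent \((\lambda-\mathcal A)^{-1}f\) with \(f\ge0\), test the weak equation with \(n^-\), and use the coercivity from Lemma~\ref{lem:PropertiesA} to get \(n^-=0\). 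Positivity of the semigroup would then follow from the exponential formula \(e^{\mathcal A t}=\lim_{k}(I-\tfrac tk\mathcal A)^{-k}\). I would present this resolvent version, since it uses only what the paper has already set up.

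\textbf{Irreducibility.} By Ouhabaz's irreducibility criterion, it suffices to show that the only measurable \(\omega_0\subset\Omega\) for which \(P_{\omega_0}u=\chi_{\omega_0}u\) leaves \(V\) invariant and satisfies \(a(P_{\omega_0}u,(I-P_{\omega_0})u)\le0\) for all \(u\in V\) are the trivial ones (measure \(0\) or full). The key step is the invariance condition. If \(\chi_{\omega_0}u\in H^1(\Omega)\) for all \(u\in H^1(\Omega)\), take \(u\equiv1\). Then \(\chi_{\omega_0}\in H^1(0,1)\subset C([0,1])\), and a continuous function taking only the values \(0,1\) on the connected set \([0,1]\) is constant. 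Hence \(|\omega_0|\in\{0,1\}\), and the semigroup is irreducible. This is where connectedness of \(\Omega\) and the presence of the diffusion term (which makes the form domain \(H^1\)) enter.

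\textbf{Main obstacle.} I expect the hardest part to be justifying rigorously that the semigroup generated by \(\mathcal A\) coincides with the one associated with the nonsymmetric form \(a\). This includes the lower-order advection term \(v\,\partial_x n\,w\) and the discontinuous \(\chi_\alpha\), so the form-method criteria need to be the versions for nonsymmetric, merely sectorial forms, which Ouhabaz provides. The identification itself follows from the Lax--Milgram and elliptic-regularity argument already given in Lemma~\ref{lem:PropertiesA}: the weak solution of \(\lambda n-\mathcal A n=f\) is exactly the form resolvent.

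If the form criterion is not used, irreducibility can instead be derived from the strong maximum principle or Harnack inequality for the resolvent. One shows that \((\lambda-\mathcal A)^{-1}f>0\) a.e. for \(f\ge0\), \(f\neq0\), which is strictly stronger than irreducibility. I would treat this only as a fallback, because the jump in \(\chi_\alpha\) makes it less clean.
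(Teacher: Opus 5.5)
Your proposal is correct and takes the same form-method (Ouhabaz) route as the paper. The paper simply cites Corollary~4.3 for positivity and Proposition~4.4 with Theorem~4.5 for irreducibility, while you verify the underlying criteria by hand: $a(n^+,n^-)=0$ by disjointness of supports (or the direct resolvent test with $n^-$), and $\chi_{\omega_0}=\chi_{\omega_0}\cdot 1\in H^1(0,1)\subset C([0,1])$ forcing $|\omega_0|\in\{0,1\}$. One harmless sign slip: the invariant-ideal condition is $a(P_{\omega_0}u,(I-P_{\omega_0})u)\ge 0$, not $\le 0$, but you never use it, since $\chi_{\omega_0}V\subset V$ alone already forces triviality.
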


\begin{proof}
All coefficients appearing in the bilinear form \eqref{eq:bilinear-form-a}---namely \(m>0\), \(v\in C^{1}(\bar{\Omega})\), \(v'\in C(\bar{\Omega})\), \(d>0\), and \(\chi_{\alpha}\in L^{\infty}(\Omega)\)---are real-valued. Since \(V=H^{1}(\Omega)\) corresponds to Neumann boundary conditions, the semigroup \((e^{\mathcal{A}t})_{t\ge 0}\) is positive by \cite[Corollary~4.3]{ouhabaz2005}.

For irreducibility, \(\Omega\) is connected, the coefficients are real-valued, and the lattice property \(n\in V\implies (\operatorname{Re}n)^{+}\in V\) for \(V=H^{1}(\Omega)\) holds by \cite[Proposition~4.4]{ouhabaz2005}. The irreducibility of \((e^{\mathcal{A}t})_{t\ge 0}\) then follows from \cite[Theorem~4.5]{ouhabaz2005}.
\end{proof}

The nonlocal operator \(\mathcal{N}\) defined in \eqref{defNon} has the following properties.
\begin{lem}[Boundedness and positivity of \(\mathcal N\)]
\label{lem:N-bounded}
The operator \(\mathcal N:L^2(\Omega)\to L^2(\Omega)\) is a bounded linear operator. Moreover, it is positive in the sense that
\(n\ge 0\) implies \(\mathcal N (n)\ge 0\) a.e..
\end{lem}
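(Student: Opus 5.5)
The plan is to split \(\mathcal N\) into its local and nonlocal parts, \(\mathcal N = (1-\mu)\mathcal M + \mu\mathcal K\). Here \(\mathcal M n := \chi_\alpha n\) is a multiplication operator, and \((\mathcal K n)(x) := \int_0^1 p(x;y)\chi_\alpha(y)n(y)\,dy\) is an integral operator. Linearity of \(\mathcal N\) is immediate from linearity of multiplication and of the integral. Boundedness of \(\mathcal N\) then reduces to boundedness of \(\mathcal M\) and \(\mathcal K\) separately.

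For \(\mathcal M\), note that \(|\chi_\alpha|\le 1\). Hence \(\|\chi_\alpha n\|\le\|n\|\), so \(\|\mathcal M\|\le 1\).

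For \(\mathcal K\), use that \(\mathcal K\) is a Hilbert--Schmidt operator with kernel \(k(x,y)=p(x;y)\chi_\alpha(y)\). Since \(|k|\le |p|\) and \(p\in L^2(\Omega\times\Omega)\) by \(F_2\), the kernel is square integrable. Fubini ensures that \(\mathcal K n(x)\) is well defined for a.e.\ \(x\) and is measurable. Applying Cauchy--Schwarz in \(y\) for fixed \(x\) gives
\[
|\mathcal K n(x)|^2 \le \Bigl(\int_0^1 |p(x;y)|^2\,dy\Bigr)\|\chi_\alpha n\|^2 .
\]
Integrating in \(x\) then yields \(\|\mathcal K n\|\le \|p\|_{L^2(\Omega\times\Omega)}\|n\|\). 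Combining the two estimates gives
\[
\|\mathcal N\|\le (1-\mu)+\mu\|p\|_{L^2(\Omega\times\Omega)},
\]
using \(\mu\in[0,1]\) from \(F_3\).

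Positivity follows by the same splitting. Suppose \(n\ge0\) a.e. Then \((1-\mu)\chi_\alpha n\ge 0\), because \(1-\mu\ge0\) and \(\chi_\alpha\ge0\). The integrand \(\chi_\alpha(y)n(y)p(x;y)\) is nonnegative a.e.\ on \(\Omega\times\Omega\), since \(p\ge0\) a.e. By Fubini, for a.e.\ \(x\) the section is nonnegative a.e.\ in \(y\), so the integral is nonnegative for a.e.\ \(x\). The sum of the two parts is therefore nonnegative a.e.

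I do not expect a real obstacle. The only point needing care is the measure-theoretic justification that \(\mathcal K n\) is a well-defined element of \(L^2(\Omega)\). The normalization \(\int_0^1 p(x;y)\,dx=1\) in \(F_2\) is not needed here; it matters only later, for mass balance.
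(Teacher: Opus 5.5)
Your proof is correct and follows the paper's argument essentially step for step. You use the same split into \((1-\mu)\chi_\alpha n\) and \(\mu\mathcal K n\), and the same Cauchy--Schwarz (Hilbert--Schmidt) estimate giving \(\|\mathcal N\|\le (1-\mu)+\mu\|p\|_{L^2(\Omega\times\Omega)}\); positivity comes from \(\chi_\alpha\ge 0\) and \(p\ge 0\), with your Fubini remarks on measurability and a.e.\ sections making explicit a point the paper leaves implicit.
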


\begin{proof}
Linearity is immediate from equation \eqref{defNon}. To show \(\mathcal{N}\) is bounded, we estimate \(\mathcal{N}\) in two parts. Since \(\chi_\alpha\in L^\infty(\Omega)\) and
\(0\le 1-\mu\le 1\),
\begin{equation}\label{eq:chibound}
\|(1-\mu)\chi_\alpha n\|
\le (1-\mu)\|\chi_\alpha\|_{L^{\infty}}\|n\|
\le \|n\|.
\end{equation}
For the integral part, define the operator \(\mathcal{K}: L^2(\Omega)\to L^2(\Omega)\) by
\begin{equation}\label{eq:K_operator}
\mathcal{K}n(x)=\int_0^1 \chi_\alpha(y)n(y)p(x;y)\,dy.
\end{equation}

Using Fubini's theorem and the Cauchy--Schwarz inequality,
\begin{align*}
\|\mathcal Kn\|^{2}
&= \int_0^1 \Bigl|\int_0^1 \chi_\alpha(y)n(y)p(x;y)\,dy\Bigr|^{2} dx\\
&\le \int_0^1 \left(\int_0^1 |\chi_\alpha(y)n(y)|^{2}\,dy\right)
            \left(\int_0^1 |p(x;y)|^{2}\,dy\right) dx \\
&= \int_0^1 |\chi_\alpha(y)n(y)|^{2}\,dy
    \int_0^1 \left(\int_0^1 |p(x;y)|^{2}\,dy\right) dx.
\end{align*}
Setting \(C_p:=\|p\|_{L^{2}(\Omega\times\Omega)}^{2}\), we obtain
\begin{equation}\label{eq:Kbound}
\|\mathcal Kn\|^{2}
\le C_p \|\chi_\alpha n\|^{2}
\le C_p \|n\|^{2}.
\end{equation}
Hence \(\|\mathcal K\|\le \sqrt{C_p}\). Combining \eqref{eq:chibound} and \eqref{eq:Kbound},
\[
\|\mathcal N n\|
\le (1-\mu)\|\chi_\alpha n\| + \mu\|\mathcal Kn\|
\le \bigl((1-\mu)+\mu\sqrt{C_p}\bigr)\|n\|.
\]
 Positivity follows from \(n\ge 0\) with \(\chi_\alpha\ge 0\) and \(p\ge 0\), which gives
\[
(\mathcal N n)(x)
= (1-\mu)\chi_\alpha(x)n(x)
+ \mu\int_0^1 \chi_\alpha(y)n(y)p(x;y)\,dy \ge 0
\]
for a.e. \(x\in \Omega\). 
\end{proof}

We now rewrite system \eqref{eq:semipde1}--\eqref{eq:semiinicon} in its mild form.

\begin{defn}[Mild solution]\label{def:semi-mild-solution}
Let \(0<\tau<T\). We say that a pair \((n,R)\in \mathcal H_\tau\times\mathcal P_\tau\) is a
\emph{mild solution} of \eqref{eq:semipde1} on \([0,\tau]\)
with initial data \((n_0,R_0)\) if for all \(t\in[0,\tau]\),
\begin{subequations}\label{eq:semi-mild-system}
\begin{align}
    n(t)
    &= e^{\mathcal A t} n_0
      + \int_0^t e^{\mathcal A (t-s)}\,b\,R(s)\,\mathcal N\bigl[n(s)\bigr]\;ds,
        \label{eq:semi-mild-n}\\[1ex]
    R(t)
    &= R_0 + \int_0^t
      \Bigl(
          \theta - \eta R(s)
          - b R(s)\int_0^1 \chi_\alpha(x)\,n(x,s)\,dx
      \Bigr)\,ds.
        \label{eq:semi-mild-R}
\end{align}
\end{subequations}
\end{defn}

\begin{thm}[Local existence and uniqueness of mild solutions]
\label{thm:local-wellposedness}
Assume that \(F_1\)--\(F_3\) hold and that \((n_0,R_0)\) satisfy \eqref{eq:semiinicon}. Then there exists a time \(\tau>0\) such that \eqref{eq:semipde1} has a unique mild solution on \([0,\tau]\) with initial data \((n_0,R_0).\)
\end{thm}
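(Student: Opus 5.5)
We will prove the statement with a Banach fixed-point argument for the map defined by the right-hand sides of \eqref{eq:semi-mild-n}--\eqref{eq:semi-mild-R}, posed on a closed ball of \(\mathcal H_\tau\times\mathcal P_\tau\). Set \(X_\tau:=\mathcal H_\tau\times\mathcal P_\tau\) with norm \(\|(n,R)\|_{X_\tau}:=\|n\|_{\mathcal H_\tau}+\|R\|_{\mathcal P_\tau}\). For a radius \(r>0\), let
\[
B_r:=\{(n,R)\in X_\tau:\ \|n-e^{\mathcal A\cdot}n_0\|_{\mathcal H_\tau}\le r,\ \|R-R_0\|_{\mathcal P_\tau}\le r\},
\]
which is a closed subset of a Banach space. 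On \(B_r\) we have uniform bounds
\[
\|n(t)\|\le e^{|\omega|\tau}\|n_0\|+r=:M_n,\qquad |R(t)|\le R_0+r=:M_R,
\]
using Lemma~\ref{lem:PropertiesA}. Define \(\Phi(n,R):=(\Phi_1,\Phi_2)\) by the right-hand sides of \eqref{eq:semi-mild-n} and \eqref{eq:semi-mild-R}.

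\textbf{Well-definedness.} First we check that \(\Phi\) maps \(B_r\) into \(X_\tau\).

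For \(\Phi_1\): the integrand \(s\mapsto e^{\mathcal A(t-s)}bR(s)\mathcal N n(s)\) is built from \(R\in C[0,\tau]\), the bounded operator \(\mathcal N\) (Lemma~\ref{lem:N-bounded}) and \(n\in\mathcal H_\tau\). Hence \(bR(\cdot)\mathcal N n(\cdot)\in C([0,\tau],L^2)\), and the standard fact that the convolution of a \(C_0\)-semigroup with a continuous forcing is continuous gives \(\Phi_1\in\mathcal H_\tau\).

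For \(\Phi_2\): the map \(s\mapsto \int_0^1\chi_\alpha n(s)\,dx=\langle\chi_\alpha,n(s)\rangle\) is continuous, with \(|\langle\chi_\alpha,n\rangle|\le\|n\|\) by Cauchy--Schwarz on the unit interval. So the integrand is continuous and \(\Phi_2\in C[0,\tau]\).

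\textbf{Self-mapping.} Next we show \(\Phi(B_r)\subset B_r\) for small \(\tau\). With \(\|\mathcal N\|\le N_0:=(1-\mu)+\mu\sqrt{C_p}\), we get
\[
\|\Phi_1(t)-e^{\mathcal A t}n_0\|\le \tau e^{|\omega|\tau}bM_RN_0M_n,\qquad
|\Phi_2(t)-R_0|\le \tau(\theta+\eta M_R+bM_RM_n).
\]
Both right-hand sides tend to \(0\) as \(\tau\to0\) with \(r\) fixed (say \(r=1\)). Choosing \(\tau\) small therefore makes \(\Phi\) map \(B_r\) into itself.

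\textbf{Contraction.} For two pairs \((n,R),(\tilde n,\tilde R)\in B_r\), we write
\[
R\mathcal N n-\tilde R\mathcal N\tilde n=(R-\tilde R)\mathcal N n+\tilde R\,\mathcal N(n-\tilde n),
\]
and treat the bilinear term \(Rn-\tilde R\tilde n\) in \(\Phi_2\) the same way. This yields
\[
\|\Phi(n,R)-\Phi(\tilde n,\tilde R)\|_{X_\tau}\le C\tau\,\|(n,R)-(\tilde n,\tilde R)\|_{X_\tau},
\]
with \(C\) depending only on \(M_n,M_R,N_0,b,\eta,\omega\). Shrinking \(\tau\) so that \(C\tau<1\), Banach's fixed-point theorem gives a unique fixed point in \(B_r\), which is a mild solution on \([0,\tau]\).

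\textbf{Uniqueness.} The fixed-point argument only gives uniqueness inside \(B_r\), so this is the only delicate point. Let \((\tilde n,\tilde R)\) be any mild solution on \([0,\tau]\). Since it is continuous, it is bounded by some \(M\). Subtracting the two mild formulations and using the same bilinear splitting gives
\[
u(t):=\|n(t)-\tilde n(t)\|+|R(t)-\tilde R(t)|\le C_M\int_0^t u(s)\,ds.
\]
Gronwall's inequality then forces \(u\equiv0\), which proves uniqueness among all mild solutions. The main obstacle overall is simply handling the quadratic terms \(R\mathcal N n\) and \(Rn\) through these local Lipschitz bounds, and that is routine given Lemmas~\ref{lem:PropertiesA} and~\ref{lem:N-bounded}.
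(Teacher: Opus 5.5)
Your proposal is correct and follows the same route as the paper. Both arguments run a Banach fixed-point iteration for \(\Phi=(\Phi_1,\Phi_2)\) on a closed ball in \(\mathcal H_\tau\times\mathcal P_\tau\), handling the bilinear terms \(R\,\mathcal N n\) and \(R\int_0^1\chi_\alpha n\,dx\) with the same Lipschitz splitting and Lemmas~\ref{lem:PropertiesA} and~\ref{lem:N-bounded}; centering the ball at \((e^{\mathcal A\cdot}n_0,R_0)\) just spares you the strong-continuity term \(\epsilon(\tau)\) the paper carries. Your separate Gronwall step for uniqueness is a genuine improvement: the paper only gets uniqueness of the fixed point inside \(B_r(n_0,R_0)\), then asserts uniqueness among all mild solutions without showing every mild solution lies in that ball.
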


\begin{proof}
For any \(\tau>0\), the product space \(X_\tau := \mathcal H_\tau\times\mathcal P_\tau\) with norm \(\|(n,R)\|_{X_\tau} := \|n\|_{\mathcal H_\tau} + \|R\|_{\mathcal P_\tau}\) is a Banach space. We define an operator 
\(\Phi:X_\tau \to X_\tau\) component-wise by \(\Phi(n,R) = (\Phi_1(n,R),\Phi_2(n,R)),\) where
\begin{subequations}\label{eq:Phi-definition}
\begin{align}
    \Phi_1(n,R)(t)
    &:= e^{\mathcal A t}n_0
      + \int_0^t e^{\mathcal A (t-s)}\,b\,R(s)\,\mathcal N\bigl[n(s)\bigr]\;ds,
        \quad t\in[0,\tau],\label{eq:Phi1}\\[1ex]
    \Phi_2(n,R)(t)
    &:= R_0 + \int_0^t
      \Bigl(
          \theta - \eta R(s)
          - b R(s)\int_0^1 \chi_\alpha(x)\,n(x,s)\,dx
      \Bigr)\,ds,
        \quad t\in[0,\tau].\label{eq:Phi2}
\end{align}
\end{subequations}

Fixed points of \(\Phi\) in \(X_\tau\) are precisely mild solutions
of \eqref{eq:semipde1} on \([0,\tau]\) with initial data \((n_0,R_0).\) For \(r>0\), set \(M = \|n_0\|+|R_0| + r\) and consider the closed ball
\[
B_r(n_0,R_0)= \{(n,R)\in X_\tau : \| (n,R)-(n_0,R_0)\|_{X_\tau} \le r \}.
\]
For any \((n,R)\in B_r(n_0,R_0)\), one has \(\|n\|_{\mathcal{H}_\tau}\le M\) and \(\|R\|_{\mathcal{P}_\tau}\le M\), and the semigroup estimate \eqref{eq:semigroup-estimate} together with Lemma~\ref{lem:N-bounded} gives
\begin{align*}
\|\Phi_1(n, R)(t) - n_0\| &\le \|e^{\mathcal{A}t}n_0 - n_0\| + \int_0^t b |R(s)|\|e^{\mathcal{A}(t-s)}\mathcal{N}n(s)\| ds\\
 &\le \epsilon(\tau) + \tau e^{\omega\tau} b M C_\mathcal{N} M,
\end{align*}
where \(C_\mathcal{N} = (1-\mu)+\mu\sqrt{C_p}\) and \(\epsilon(\tau) := \sup_{t\in[0,\tau]}\|e^{\mathcal{A}t}n_0 - n_0\| \to 0\) as \(\tau \to 0\) by the strong continuity of \((e^{\mathcal{A}t})_{t\ge0}\). Similarly,
\begin{align*}
|\Phi_2(n, R)(t) - R_0| &\le \int_0^t \bigl(\theta + \eta|R(s)| + b|R(s)| \|\chi_{\alpha}\| \|n(s)\|\bigr) ds\\
&\le \tau (\theta + \eta M + b M^2)
\end{align*}
for all \(t\in[0,\tau]\). Summing these estimates and choosing \(\tau\) small enough gives \(\|\Phi(n, R) - (n_0, R_0)\|_{X_{\tau}} \le r\), hence \(\Phi (B_r(n_0,R_0)) \subseteq B_r(n_0,R_0)\).

We now verify that \(\Phi\) is a contraction on \(B_r(n_0,R_0)\).

Let \((n_1, R_1), (n_2, R_2) \in B_r(n_0,R_0)\). Then,
\begin{align}
  &\|\Phi_1(n_1, R_1) - \Phi_1(n_2, R_2)\|_{\mathcal{H}_\tau}\nonumber\\
  &= \sup_{t \in [0,\tau]}\biggl\| \int_0^t be^{\mathcal{A}(t-s)}\bigl(R_1(s)\mathcal{N}[n_1(s)]-R_2(s)\mathcal{N}[n_2(s)]\bigr)ds\biggr\| \nonumber\\
  &\le \sup_{t \in [0,\tau]} \int_0^t e^{\omega(t-s)} b \bigl( |R_1-R_2| C_\mathcal{N} \|n_1\| + |R_2| C_\mathcal{N} \|n_1-n_2\| \bigr) ds \nonumber\\
  &\le \tau e^{\omega\tau} bC_\mathcal{N} M \bigl( \|R_1-R_2\|_{\mathcal{P}_\tau} + \|n_1-n_2\|_{\mathcal{H}_\tau} \bigr). \label{eq:Phi1-contraction}
\end{align}
Similarly, 
\begin{align*}
  &\|\Phi_2(n_1, R_1) - \Phi_2(n_2, R_2)\|_{\mathcal{P}_\tau}\\
  &= \sup_{t\in[0,\tau]} \biggl| \int_0^t \biggl(\eta (R_1(s)-R_2(s)) + b\int_0^1\chi_\alpha(x)\bigl(R_1(s)n_1(x,s)-R_2(s)n_2(x,s)\bigr)dx\biggr)ds \biggr| \\
  &\le \tau \bigl( \eta \|R_1-R_2\|_{\mathcal{P}_\tau} + b ( \|R_1-R_2\|_{\mathcal{P}_\tau} \|n_1\|_{\mathcal{H}_\tau} + \|R_2\|_{\mathcal{P}_\tau} \|n_1-n_2\|_{\mathcal{H}_\tau} ) \bigr) \\
  &\le \tau \bigl( (\eta + bM) \|R_1-R_2\|_{\mathcal{P}_\tau} + bM \|n_1-n_2\|_{\mathcal{H}_\tau} \bigr).
\end{align*}
Summing the inequalities gives
\[
\|\Phi(n_1, R_1) - \Phi(n_2, R_2)\|_{X_{\tau}} \le \tau e^{\omega\tau} C_{M} \|(n_1, R_1) - (n_2, R_2)\|_{X_{\tau}},
\]
where \(C_{M}\) depends on the constants \(M, b, \eta, C_{\mathcal{N}}\).
By choosing \(\tau\) such that \(\tau e^{\omega\tau}C_{M} < 1\), the operator \(\Phi\) is a strict contraction on \(B_r(n_0,R_0)\). The Banach fixed-point theorem then gives a unique fixed point \((n, R) \in B_r(n_0,R_0)\), which is the unique mild solution of \eqref{eq:semipde1} on \([0, \tau]\) with initial data \((n_0,R_0)\).
\end{proof}

In order to extend the local solution to all \(t\ge 0\), the following nonnegativity result is required.

\begin{lem}[Positivity of solutions]\label{lem:positivity}
Let \(\tau>0\) be the local existence time produced by Theorem~\ref{thm:local-wellposedness}, and let \((n,R)\) be the unique mild solution on \([0,\tau]\) with initial data \(n_0\in L^2_+(\Omega)\) and \(R_0\ge0\) as in \eqref{eq:semiinicon}. Then
\[
n(x,t)\ge0 \quad\text{for a.e. }x\in\Omega,\ t\in[0,\tau],
\qquad
R(t)\ge0 \quad\text{for all }t\in[0,\tau].
\]
\end{lem}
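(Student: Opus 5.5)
We prove nonnegativity of \(n\) and \(R\) separately, exploiting the structure of each equation. The natural order is to treat \(n\) first, with \(R\) regarded as a given continuous function, and then handle the scalar ODE for \(R\). The complication is that the mild formulation \eqref{eq:semi-mild-n} involves \(R(s)\), whose sign is not yet known.

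\textbf{Step 1: the sign of \(R\).} Fix the mild solution \((n,R)\) and set \(g(s):=\int_0^1\chi_\alpha n(x,s)\,dx\). Since \(n\in\mathcal H_\tau\), the function \(g\) is continuous on \([0,\tau]\). By \eqref{eq:semi-mild-R}, \(R\) is \(C^1\) and solves the linear ODE \(R'=\theta-(\eta+bg(t))R\). The variation of constants formula gives
\[
R(t)=R_0e^{-\int_0^t(\eta+bg)}+\theta\int_0^t e^{-\int_s^t(\eta+bg)}\,ds\ \ge 0,
\]
because \(R_0\ge0\) and \(\theta>0\). This holds regardless of the sign of \(g\), so \(R\ge0\) is settled first.

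\textbf{Step 2: the sign of \(n\) by a Picard-type iteration.} Now treat \(R\ge0\) as a fixed coefficient in \(\mathcal P_\tau\). Then \(n\) is the unique fixed point of the linear map
\[
\Psi(u)(t)=e^{\mathcal At}n_0+\int_0^t e^{\mathcal A(t-s)}bR(s)\mathcal N[u(s)]\,ds
\]
on \(\mathcal H_\tau\). The estimate \eqref{eq:Phi1-contraction}, specialised to fixed \(R\), gives \(\|\Psi u_1-\Psi u_2\|_{\mathcal H_\tau}\le \tau e^{\omega\tau}bC_{\mathcal N}\|R\|_{\mathcal P_\tau}\|u_1-u_2\|_{\mathcal H_\tau}\).

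If \(\tau\) is too large for this to be a contraction, we either subdivide \([0,\tau]\) into short subintervals and restart from the endpoint values, or use an exponentially weighted norm \(\sup_t e^{-\lambda t}\|u(t)\|\) with \(\lambda\) large. The weighted norm makes \(\Psi\) a contraction on the whole interval, since the Volterra kernel is bounded. In either case, \(n\) is the limit of the Picard iterates \(u_{k+1}=\Psi(u_k)\) starting from \(u_0:=e^{\mathcal At}n_0\). Uniqueness of the fixed point of \(\Psi\) identifies the limit with our \(n\), because \(n\) solves \eqref{eq:semi-mild-n} with this \(R\).

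The cone \(C([0,\tau],L^2_+(\Omega))\) is invariant under \(\Psi\), by three facts:
\begin{itemize}
\item \(e^{\mathcal At}n_0\ge0\) by Lemma~\ref{lem:Semigroup-A-Positive-Irreducible};
\item for \(u\ge0\), the integrand \(e^{\mathcal A(t-s)}bR(s)\mathcal N[u(s)]\) is nonnegative by \(R\ge0\), Lemma~\ref{lem:N-bounded}, and positivity of the semigroup;
\item a Bochner integral of nonnegative \(L^2\)-valued functions is nonnegative, since \(L^2_+\) is a closed convex cone.
\end{itemize}
Hence every iterate \(u_k\) is nonnegative. Because \(L^2_+\) is closed, the limit satisfies \(n(t)\ge0\) a.e. for each \(t\in[0,\tau]\).

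\textbf{Main obstacle.} The delicate point is the interplay between the two components: positivity of \(n\) requires \(R\ge0\). Step 1 removes this obstacle, because the linear ODE for \(R\) gives \(R\ge0\) without any sign information on \(n\). The remaining technicality is ensuring the linear iteration converges on all of \([0,\tau]\) rather than only on a shorter interval; the weighted-norm argument handles this cleanly.
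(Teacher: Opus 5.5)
Your proposal is correct and follows the paper's proof essentially step for step. You first obtain \(R\ge0\) from the linear ODE; your variation-of-constants formula is equivalent to the paper's integrating factor \(\Psi(t)\). You then obtain \(n\ge0\) from the Picard iterates of \(\Phi_1(\cdot,R)\), invariance of the closed cone under the positive semigroup and the positive operator \(\mathcal N\), and passage to the limit.

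Your weighted-norm safeguard is harmless but not needed. The paper's choice of \(\tau\) already makes \(\Phi_1(\cdot,R)\) a strict contraction on \(\mathcal H_\tau\), by \eqref{eq:Phi1-contraction} with \(R_1=R_2=R\).
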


\begin{proof}
Define the integrating factor
\[
    \Psi(t) := \exp\!\left(\eta t + b\int_0^t\int_0^1 \chi_\alpha(x)n(x,s)\,dx\,ds\right) > 0,
    \qquad t\in[0,\tau]
\]
and set \(Y(t) = \Psi(t)R(t)\). Using \eqref{eq:semipde1b}, a direct computation gives
\[
\begin{aligned}
    \dot{Y}(t)
    &= \dot{\Psi}(t)R(t) + \Psi(t)\dot{R}(t) \\
    &= \Psi(t)\bigg(\eta + b \!\int_0^1 \chi_\alpha n(x,t)\,dx\bigg)R(t)
       + \Psi(t)\bigg(\theta - \eta R(t) - b R(t)\!\int_0^1 \chi_\alpha n(x,t)\,dx\bigg) \\
    &= \Psi(t)\,\theta \;\ge\; 0.
\end{aligned}
\]
Hence \(Y\) is nondecreasing on \([0,\tau]\). Since \(Y(0)=R_0\ge0\) and \(\Psi(t)>0\), it follows that \(R(t) = Y(t)/\Psi(t) \ge 0\) for all \(t\in[0,\tau]\).

For the positivity of \(n\), let
\[
\mathcal{H}_\tau^+:=\{n\in\mathcal H_\tau : n(x,t)\ge 0 \text{ for a.e.\ }x\in\Omega,\ \forall t\in[0,\tau]\}.
\]
Define iterates \(n^{(0)}(t) = n_0\) and \(n^{(k+1)}(t) = \Phi_1(n^{(k)},R)\), where \(\Phi_1\) is given by \eqref{eq:Phi1}. Since \(n_0\in L^2_+(\Omega)\), we have \(n^{(0)}\in\mathcal{H}_\tau^+\). Moreover, since \((e^{\mathcal A t})_{t\ge0}\) is a positive semigroup by Lemma~\ref{lem:Semigroup-A-Positive-Irreducible}, \(\mathcal{N}\) is a positive operator by Lemma~\ref{lem:N-bounded}, and \(R(t)\ge 0\), if \(n^{(k)}\in\mathcal{H}_\tau^+\), then
\[
n^{(k+1)}(t) = e^{\mathcal{A}t}n_0+\int_0^t e^{\mathcal{A}(t-s)}bR(s)\mathcal{N}[n^{(k)}(s)]\,ds \ge 0,
\]
and hence \(n^{(k)}\in\mathcal{H}_\tau^+\) for all \(k\ge 0\) by induction. Setting \(R_1=R_2=R\) in \eqref{eq:Phi1-contraction}, the map \(\Phi_1(\cdot,R)\) is a strict contraction on \(\mathcal{H}_\tau\), and the iterates \(\{n^{(k)}\}\) converge in \(\mathcal{H}_\tau\) to the unique mild solution \(n\). Since \(\mathcal{H}_\tau^+\) is a closed subset of \(\mathcal{H}_\tau\), we conclude that \(n\in\mathcal{H}_\tau^+\).
\end{proof}

\begin{thm}[Global well-posedness of mild solutions]\label{thm:global-solution}
Let \((n_0,R_0)\) satisfy \eqref{eq:semiinicon} with
\(n_0\in L^2_+(\Omega)\) and \(R_0\ge0\).
Then system \eqref{eq:semipde1} has a unique mild solution \((n,R)\) on \([0,\infty)\) satisfying
\(
n(x,t)\ge0\) for a.e.\ \(x\in\Omega\) and
\(R(t)\ge0\) for all \(t\ge0.\)
\end{thm}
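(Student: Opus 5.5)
The plan is to extend the local solution of Theorem~\ref{thm:local-wellposedness} by a continuation argument. The key point is that the local existence time \(\tau\) depends only on an upper bound for \(\|n_0\|+|R_0|\) (through \(M\)). It is therefore enough to derive a priori bounds on \(\|n(t)\|\) and \(R(t)\) on every finite interval. I will let \(T^*\) be the supremum of times on which a mild solution exists. Assuming \(T^*<\infty\), I will show that \(\|n(t)\|+R(t)\) stays bounded on \([0,T^*)\). Restarting from a time close to \(T^*\) with a uniform \(\tau\) then gives a contradiction. Uniqueness on each interval follows from the contraction argument, and solutions on overlapping intervals agree, so they patch together into a unique global mild solution. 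Nonnegativity on each piece comes from Lemma~\ref{lem:positivity} applied successively, since the restart data are nonnegative; the semigroup property makes the concatenated function satisfy \eqref{eq:semi-mild-system}.

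The first a priori bound is for \(R\). By Lemma~\ref{lem:positivity}, \(n\ge0\) and \(R\ge0\), so the consumption term is nonnegative and \(\dot R\le \theta-\eta R\) in the integral sense of \eqref{eq:semi-mild-R}. Comparison, or the integrating factor computation already used, gives \(0\le R(t)\le \max\{R_0,\theta/\eta\}=:\bar R\) for all \(t\) in the existence interval.

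Next I bound \(n\). From \eqref{eq:semi-mild-n} together with \eqref{eq:semigroup-estimate} and Lemma~\ref{lem:N-bounded},
\[
\|n(t)\|\le e^{\omega t}\|n_0\|+\int_0^t e^{\omega(t-s)}\,b\bar R\,C_{\mathcal N}\|n(s)\|\,ds .
\]
Multiplying by \(e^{-\omega t}\) and applying Gronwall's inequality gives \(\|n(t)\|\le \|n_0\|\exp\bigl((\omega+b\bar R C_{\mathcal N})t\bigr)\). This is finite on any bounded interval.

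The main obstacle I expect is making the restart uniform. I must check that \(\tau\) in Theorem~\ref{thm:local-wellposedness} can be chosen depending only on a bound \(K\) for \(\|n_0\|+|R_0|\). The contraction part is fine, because it depends only on \(M\). The self-mapping estimate, however, involves \(\epsilon(\tau)=\sup_{t\le\tau}\|e^{\mathcal A t}n_0-n_0\|\), which depends on \(n_0\) itself, not just on its norm. To avoid this, I will run the fixed point in a ball centered at \((e^{\mathcal A\cdot}n_0,R_0)\) rather than at the constant \((n_0,R_0)\). Then the self-mapping estimate involves only \(\tau e^{\omega\tau}bC_{\mathcal N}M^2\) and \(\tau(\theta+\eta M+bM^2)\), where \(M\) now depends only on \(K\), \(r\) and \(e^{\omega\tau}\). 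With this, \(\tau=\tau(K)\) is uniform. Taking \(K\) to be the a priori bound on \([0,T^*]\) and restarting at \(T^*-\tau(K)/2\) extends the solution beyond \(T^*\), which is the required contradiction.
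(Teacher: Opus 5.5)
Your proposal is correct and follows essentially the same route as the paper: the comparison bound $0\le R(t)\le\max\{R_0,\theta/\eta\}$, the Gronwall estimate $\|n(t)\|\le\|n_0\|e^{(\omega+b\bar R C_{\mathcal N})t}$, and a continuation-by-contradiction argument, with nonnegativity propagated interval by interval via Lemma~\ref{lem:positivity}. Your recentering of the fixed-point ball at $(e^{\mathcal A\cdot}n_0,R_0)$ is a genuine improvement: the paper only asserts that the local existence time depends on a norm bound of the data, but its local proof contains $\epsilon(\tau)=\sup_{t\le\tau}\|e^{\mathcal A t}n_0-n_0\|$, which depends on $n_0$ itself, so your modification is what actually justifies the uniform restart.
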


\begin{proof}
By Theorem~\ref{thm:local-wellposedness}, a unique mild solution \((n,R)\) exists on a maximal interval \([0,T_{\max})\) for some \(T_{\max}\in(0,\infty]\). Lemma~\ref{lem:positivity} gives \(n\ge 0\) a.e.\ and \(R\ge 0\) on the local interval \([0,\tau]\) given by Theorem~\ref{thm:local-wellposedness}. Since \((n(\tau),R(\tau))\in L^2_+(\Omega)\times\mathbb R_+\), the argument may be repeated on successive intervals, and by induction \(n\ge 0\) a.e.\ and \(R\ge 0\) on \([0,T_{\max})\).
We now prove that necessarily \(T_{\max}=\infty\).

Since \(n(t)\ge 0\) for all \(t\), we have from \eqref{eq:semipde1b} that \(\dot R(t) \le \theta - \eta R(t)\) on \([0, T_{\max})\). By the comparison principle for scalar ODEs,
\begin{align}
\label{eq:range-of-R(t)}
0 \le R(t) \le \max\Bigl\{R_0,\ \frac{\theta}{\eta}\Bigr\}
    \qquad\text{for all }t\in[0,T_{\max}).
\end{align}
Setting \(R_{\max}:=\max\{R_0,\theta/\eta\}\), the mild equation \eqref{eq:semi-mild-n} gives
\[
\|n(t)\| \le e^{\omega t}\|n_0\| + bR_{\max}C_\mathcal{N}\int_0^t e^{\omega(t-s)}\|n(s)\|\,ds.
\]
Multiplying by \(e^{-\omega t}\) and applying Gronwall's inequality yields
\[
\|n(t)\|\le \|n_0\|\,e^{C_* t}\qquad\text{for all }t\in[0,T_{\max}),
\]
where \(C_*:=\omega+bR_{\max}C_\mathcal{N}\) is independent of \(t\) and \(T_{\max}\). Suppose \(T_{\max}<\infty\). Then \eqref{eq:range-of-R(t)} and the above estimate give
\[
\|(n,R)\|_{X_\tau}\le M_0:=R_{\max}+\|n_0\|\,e^{C_* T_{\max}}<\infty\qquad\text{for every }\tau\in[0,T_{\max}).
\]
By Theorem~\ref{thm:local-wellposedness}, the local existence time \(\tau\) is determined by a norm bound on the initial data and the model parameters. Choosing \(t_*<T_{\max}\) with \(T_{\max}-t_*<\tau(M_0)\) and applying Theorem~\ref{thm:local-wellposedness} with initial data \((n(t_*),R(t_*))\) extends the solution beyond \(T_{\max}\), contradicting maximality. Hence \(T_{\max}=\infty\).
\end{proof}

\section{Asymptotic Behavior}\label{sec:AsymptoticBehavior}

We now turn our attention to the steady state version of \eqref{eq:semipde1}, 
\begin{subequations}\label{eq:steadystate}
    \begin{align}
        0&=\chi_\alpha[b\hat{R}-d]\hat{n}-\mu b\chi_\alpha\hat{R}\hat{n}+b\mu\hat{R}\int^{1}_{0}\chi_\alpha(y)\hat{n}(y)p(x;y)\,dy-\partial_x[v\hat{n}]+m\partial_{xx}\hat{n}\label{eq:ss_n}\\
    0&=\theta-\eta \hat{R}-b\hat{R}\int^{1}_{0} \chi_\alpha(x) \hat{n}(x)\,dx.\label{eq:ss_R}
    \end{align}
\end{subequations}

The steady state equation \eqref{eq:ss_n} may be written as \(\mathcal L_{\hat R}\hat n=0\), where
\begin{equation}\label{eq:LR_operator}
\mathcal L_R := \mathcal A + bR\,\mathcal N,\qquad R\ge 0.
\end{equation}
Any nonnegative steady state of \eqref{eq:semipde1} thus corresponds to a nonnegative element of \(\ker(\mathcal L_{\hat R})\) for some \(\hat R\ge 0\). The long-time behavior of solutions is therefore governed by the spectral properties of the family \((\mathcal L_R)_{R\ge 0}\), and in particular by the sign of the spectral bound \(s(\mathcal L_R)\). We begin with two properties of \(\mathcal{A}\) not required for well-posedness but essential for the spectral analysis.

\begin{lem}[Analytic semigroup generated by \(\mathcal{A}\)]
\label{lem:Analytic-A}
The operator \(\mathcal A\) generates an analytic \(C_0\)-semigroup \(\{e^{\mathcal A t}\}_{t\ge 0}\) on \(L^2(\Omega)\).
\end{lem}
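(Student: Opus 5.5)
We will use the form method: an operator associated with a densely defined, continuous, elliptic (or \(H\)-elliptic) sesquilinear form on a Hilbert space generates an analytic semigroup. Concretely, we work with the form \(a\) from \eqref{eq:bilinear-form-a} on \(V=H^1(\Omega)\), complexified as
\[
a(n,w)=\int_0^1\bigl[m\,n_x\bar w_x+v\,n_x\bar w+(v'+d\chi_\alpha)\,n\bar w\bigr]\,dx .
\]
By the proof of Lemma~\ref{lem:PropertiesA}, the operator associated with \(a\) is exactly \(-\mathcal A\) with the Neumann domain \(\mathcal D(\mathcal A)\). Elliptic regularity there shows that the form operator's domain coincides with \(\mathcal D(\mathcal A)\).

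\textbf{Continuity.} Since \(v,v'\in C(\bar\Omega)\) and \(\chi_\alpha\in L^\infty\), we have \(|a(n,w)|\le C\|n\|_{H^1}\|w\|_{H^1}\).

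\textbf{Ellipticity.} We repeat the Young's inequality computation of Lemma~\ref{lem:PropertiesA} for complex \(n\), estimating \(|\int v n_x\bar n|\le \|v\|_{L^\infty}\|n_x\|\|n\|\). The term \(\int (v'+d\chi_\alpha)|n|^2\) is bounded below by \(-\|v'\|_{L^\infty}\|n\|^2\). This yields
\[
\operatorname{Re}a(n,n)+\omega'\|n\|^2\ge \tfrac m2\|n\|_{H^1}^2
\]
for some \(\omega'\in\R\).

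\textbf{Sectoriality.} The shifted form \(a_{\omega'}=a+\omega'\langle\cdot,\cdot\rangle\) is then continuous and coercive on \(H^1\). We bound the imaginary part: \(|\operatorname{Im}a_{\omega'}(n,n)|=|\operatorname{Im}\int v n_x\bar n|\le C\|n\|_{H^1}^2\le \tfrac{2C}{m}\operatorname{Re}a_{\omega'}(n,n)\). Hence the numerical range of \(-\mathcal A+\omega'\) lies in a sector of half-angle \(\arctan(2C/m)<\pi/2\). By the standard result (e.g.\ \cite[Theorem~1.52/1.54]{ouhabaz2005}, or Kato's theorem on sectorial forms), \(\mathcal A-\omega'\) generates a bounded analytic semigroup on \(L^2(\Omega)\). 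Therefore \(\mathcal A\) generates an analytic \(C_0\)-semigroup, which coincides with \(e^{\mathcal At}\) of Lemma~\ref{lem:PropertiesA} by uniqueness of generated semigroups.

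As an alternative route, we could view \(\mathcal A\) as the Neumann Laplacian \(m\partial_{xx}\), which is self-adjoint and hence analytic, perturbed by \(-v n_x-(v'+d\chi_\alpha)n\). This perturbation is relatively bounded with relative bound \(0\), since \(\|n_x\|\le\epsilon\|n_{xx}\|+C_\epsilon\|n\|\) by interpolation. Analytic generators are stable under such perturbations, which gives the same conclusion.

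The main obstacle is bookkeeping: we must check that the form setting produces precisely the operator with domain \(\mathcal D(\mathcal A)\). The lower-order drift must also be controlled in the complex (non-symmetric) case, so that the sector angle stays strictly below \(\pi/2\). This is where the first-order term \(v n_x\bar n\) needs the \(H^1\)-coercivity from the \(m\|n_x\|^2\) term.
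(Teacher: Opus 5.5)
Your proof is correct, but your main route differs from the paper's. The paper's proof is exactly what you list as your alternative. It writes \(\mathcal A=A_0+Q\), with \(A_0=m\partial_{xx}\) the self-adjoint, nonpositive Neumann Laplacian. It then proves \(\|\partial_x n\|\le\epsilon\|A_0 n\|+\frac{1}{2m\epsilon}\|n\|\) by one integration by parts and Young's inequality, concludes that \(Q\) has \(A_0\)-bound \(0\), and applies \cite[Theorem~III.2.10]{Engel-Nagel}.

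Your primary argument instead uses the sesquilinear form on \(H^1(\Omega)\). Continuity plus the shifted coercivity estimate make \(a+\omega'\langle\cdot,\cdot\rangle\) a closed sectorial form, and the Kato/Ouhabaz theorem then gives analyticity with an explicit sector angle. The coercivity estimate is the Young's-inequality computation of Lemma~\ref{lem:PropertiesA}, now with the \(v'\) term included. Your separate bound on \(\operatorname{Im}a\) is harmless but redundant, since continuity and coercivity of the shifted form already confine its numerical range to a sector.

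\textbf{What the form route buys.} It fits the rest of the paper: it uses the same form and the same framework \cite{ouhabaz2005} as the positivity and irreducibility argument of Lemma~\ref{lem:Semigroup-A-Positive-Irreducible}. It also needs no \(H^2\) information to obtain generation.

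\textbf{What it costs.} It costs the bookkeeping you flagged. You must identify the operator associated with \(a\), call it \(A_a\), with \(-\mathcal A\) on \(\mathcal D(\mathcal A)\). This does follow from the elliptic-regularity step in Lemma~\ref{lem:PropertiesA}. It also follows cleanly from \(-\mathcal A\subset A_a\), surjectivity of \(\lambda-\mathcal A\), and injectivity of \(\lambda+A_a\) for large \(\lambda\). The paper's perturbation route avoids this step entirely, because the perturbed generator automatically has domain \(\mathcal D(A_0)=\mathcal D(\mathcal A)\). The price is the interpolation inequality, which is elementary in one dimension.

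One small wording fix in your alternative: self-adjointness alone does not give generation. You need \(A_0\le 0\), or at least semiboundedness, which holds here because \(\langle A_0 n,n\rangle=-m\|n_x\|^2\).
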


\begin{proof}
Write $\mathcal{A}=A_0+Q$, where $A_0:=m\partial_{xx}$ with domain $\mathcal{D}(A_0)=\{n\in H^2(\Omega):\partial_x n(0)=\partial_x n(1)=0\}$ and $Q:=-\partial_x(v(x)\,\cdot\,)-\chi_\alpha d$. The operator $A_0$ is self-adjoint on $L^2(\Omega)$ and satisfies $\langle A_0 n,n\rangle=-m\|\partial_x n\|_{L^2}^2\le 0$ for all $n\in\mathcal{D}(A_0)$, and hence its spectrum lies in $(-\infty,0]$. It is therefore sectorial and generates a bounded analytic semigroup on $L^2(\Omega)$ (see \cite[Corollary~II.4.7]{Engel-Nagel}).

We now verify that $Q$ is $A_0$-bounded with $A_0$-bound $0$. Since $\partial_x(v(x)n)=v(x)\partial_x n+v'(x)n$, every $n\in\mathcal{D}(A_0)$ satisfies
\begin{align}
\label{eq:Q-bound}
\|Qn\|_{L^2}\le\|v\|_\infty\|\partial_x n\|_{L^2}+(\|v'\|_\infty+d)\|n\|_{L^2}.
\end{align}
It therefore suffices to show that for every $\epsilon>0$ there exists $C_\epsilon>0$ such that $\|\partial_x n\|_{L^2}\le\epsilon\|A_0 n\|_{L^2}+C_\epsilon\|n\|_{L^2}$. Since $n\in\mathcal{D}(A_0)$ satisfies the Neumann conditions $\partial_x n(0)=\partial_x n(1)=0$, integration by parts yields
\[
\|\partial_x n\|_{L^2}^2=-\int_0^1 n\,\partial_{xx}n\,dx=\frac{1}{m}\langle n,-A_0 n\rangle\le\frac{1}{m}\|n\|_{L^2}\|A_0 n\|_{L^2}.
\]
For any $\delta>0$, Young's inequality gives
\[
\|\partial_x n\|_{L^2}^2\le\frac{\delta}{2m}\|A_0 n\|_{L^2}^2+\frac{1}{2m\delta}\|n\|_{L^2}^2.
\]
Taking square roots and choosing $\delta=2m\epsilon^2$, we obtain
\begin{align}
\label{eq:interpolation}
\|\partial_x n\|_{L^2}\le\epsilon\|A_0 n\|_{L^2}+\frac{1}{2m\epsilon}\|n\|_{L^2}.
\end{align}
Inserting \eqref{eq:interpolation} into \eqref{eq:Q-bound} gives
\[
\|Qn\|_{L^2}\le\|v\|_\infty\epsilon\,\|A_0 n\|_{L^2}+\left(\frac{\|v\|_\infty}{2m\epsilon}+\|v'\|_\infty+d\right)\|n\|_{L^2}.
\]
Since $\|v\|_\infty\epsilon$ can be made arbitrarily small by the choice of $\epsilon$, the operator $Q$ has $A_0$-bound $0$. Therefore, the operator $\mathcal{A}=A_0+Q$ generates an analytic $C_0$-semigroup on $L^2(\Omega)$ by \cite[Theorem~III.2.10]{Engel-Nagel}.
\end{proof}

Combining the positivity and irreducibility established in Lemma~\ref{lem:Semigroup-A-Positive-Irreducible} with the analyticity just proved yields pointwise strict positivity of the semigroup \(e^{\mathcal{A}t}\).

\begin{cor}[Strict positivity of \(e^{\mathcal{A}t}\)]
\label{cor:strict-positivity-A}
For every \(0\le f\in L^2(\Omega)\) with \(f\not\equiv 0\) and every \(t > 0\),
\[
e^{\mathcal{A}t}f(x) > 0\quad\text{for a.e.\ }x\in\Omega.
\]
\end{cor}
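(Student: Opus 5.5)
The plan is to invoke the standard fact that a positive, irreducible, \emph{analytic} (hence eventually norm-continuous, in particular holomorphic in \(t\)) semigroup on an \(L^p\)-space is strictly positive for every \(t>0\). This is the result found, for instance, in Arendt et al.\ (One-parameter semigroups of positive operators, C-III, Theorem 3.2(b)) or in \cite{ouhabaz2005}, Theorem~2.9 and the discussion following Theorem~4.5. The corollary is obtained directly by combining Lemma~\ref{lem:Semigroup-A-Positive-Irreducible} with Lemma~\ref{lem:Analytic-A}. I would nonetheless sketch the underlying mechanism, so the argument does not look like a bare citation.

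Fix \(0\le f\in L^2(\Omega)\), \(f\not\equiv0\), and \(t_0>0\). Let \(E=\{x: (e^{\mathcal A t_0}f)(x)=0\}\). The goal is to show \(|E|=0\).

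\textbf{Step 1 (the zero set is a closed ideal).} Consider the nonnegative function \(g(x)=\int_0^\infty e^{-\lambda s}(e^{\mathcal A s}f)(x)\,ds=(R(\lambda,\mathcal A)f)(x)\) for \(\lambda>\omega\). Irreducibility in the sense of \cite{ouhabaz2005} means that the only closed ideals \(L^2(\Omega\setminus E')\) invariant under the semigroup are trivial. From this one gets that the resolvent is positivity improving: \(R(\lambda,\mathcal A)f>0\) a.e. Indeed, the support of \(R(\lambda,\mathcal A)f\) generates an invariant ideal, and that ideal is nonzero because \(\lambda R(\lambda,\mathcal A)f\to f\neq0\).

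\textbf{Step 2 (analyticity transfers positivity to each fixed \(t_0\)).} Let \(h\in L^2_+\) with \(\operatorname{supp} h\subset E\). Then \(\phi(t):=\langle e^{\mathcal A t}f,h\rangle\ge0\) vanishes at \(t_0\). Consider the set \(S\) of all \(t>0\) where \(\phi(t)=0\). By positivity and the semigroup property, if \(\phi(t_0)=0\) then \(\phi(t)=0\) for all \(t\le t_0\). To see this, write \(e^{\mathcal A t_0}f=e^{\mathcal A(t_0-t)}e^{\mathcal A t}f\). Here one has to argue that the ideal generated by \(e^{\mathcal A t}f\) is mapped into that generated by \(e^{\mathcal A t_0}f\). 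This is the standard ``support can only grow'' argument, and it is where I would rely on the cited theorem rather than redo details. Hence \(\phi\) vanishes on \((0,t_0]\). Since \(\phi\) is real-analytic on \((0,\infty)\) by Lemma~\ref{lem:Analytic-A}, it follows that \(\phi\equiv0\) on \((0,\infty)\). Therefore \(\langle R(\lambda,\mathcal A)f,h\rangle=\int_0^\infty e^{-\lambda t}\phi(t)\,dt=0\). Step 1 gives \(R(\lambda,\mathcal A)f>0\) a.e., so \(h=0\) a.e. Since \(h\) was an arbitrary nonnegative function supported in \(E\), this forces \(|E|=0\).

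\textbf{Main obstacle.} I expect the delicate point to be the monotonicity of the zero set in Step 2. This requires that \(e^{\mathcal A s}\) cannot create positivity on a region from which the input vanished, relative to ideals; equivalently, that the order ideal generated by a positive vector is mapped into the ideal generated by its image. Rather than proving this from scratch, I would cite the abstract theorem that positive irreducible holomorphic semigroups are positivity improving, which packages exactly this step.
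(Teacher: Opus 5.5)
Your proof is, in substance, the paper's. The paper proves the corollary by a single appeal to the theorem that a positive, irreducible, holomorphic semigroup on an \(L^p\)-space is positivity improving for every \(t>0\) (\cite[Theorem~10.1.2]{arendtheat}), using Lemma~\ref{lem:Semigroup-A-Positive-Irreducible} and Lemma~\ref{lem:Analytic-A}. Your proof rests on the same citation, so it is correct on that basis. Your Step~1 is also fine. If \(h\ge0\) is supported in \(\{R(\lambda,\mathcal A)f=0\}\), then \(\int_0^\infty e^{-\lambda s}\langle e^{\mathcal A s}f,h\rangle\,ds=0\) forces \(\langle e^{\mathcal A s}f,h\rangle=0\) for all \(s\ge0\). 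Hence \(\langle e^{\mathcal A t}R(\lambda,\mathcal A)f,h\rangle=\int_0^\infty e^{-\lambda s}\langle e^{\mathcal A(t+s)}f,h\rangle\,ds=0\), so the support of \(R(\lambda,\mathcal A)f\) does carry an invariant ideal.

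The mechanism you sketch in Step~2, however, does not work as written. The implication \(\phi(t_0)=0\Rightarrow\phi=0\) on \((0,t_0]\) does not follow from positivity and the semigroup property. The rotation semigroup on \(L^2\) of the circle is positive and even irreducible, yet the support of \(T(t)f\) simply moves. Likewise, the fact that \(e^{\mathcal A(t_0-t)}\) maps the ideal generated by \(e^{\mathcal A t}f\) into the one generated by \(e^{\mathcal A t_0}f\) gives no inclusion between the two supports.

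Analyticity has to be used at exactly this point. Let \(P\) be multiplication by \(\mathbf{1}_E\) and set \(u(t):=Pe^{\mathcal A t}f\ge0\); this is real-analytic on \((0,\infty)\) with \(u(t_0)=0\). Since \(e^{\mathcal A t_0}f=e^{\mathcal A r}e^{\mathcal A(t_0-r)}f\ge e^{\mathcal A r}u(t_0-r)\), positivity gives \(0\le Pe^{\mathcal A r}u(t_0-r)\le Pe^{\mathcal A t_0}f=0\) for \(0<r<t_0\). Suppose \(u\not\equiv0\) near \(t_0\), and write \(u(t_0-r)=r^k w+O(r^{k+1})\) with \(w\neq0\). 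The coefficient \(w\) lies in the range of \(P\). Dividing by \(r^k\) and letting \(r\downarrow0\) then gives \(0=\lim Pe^{\mathcal A r}\bigl(r^{-k}u(t_0-r)\bigr)=Pw=w\), a contradiction. Hence \(u\equiv0\) near \(t_0\), and therefore on \((0,\infty)\) by analyticity. The rest of your Step~2 (the Laplace transform combined with Step~1) then goes through.

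With this replacement your sketch becomes a self-contained proof. As written, all the weight rests on the citation, exactly as in the paper.
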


\begin{proof}
The conclusion follows from \cite[Theorem~10.1.2]{arendtheat}.
\end{proof}

Since \(\mathcal{N}\) is bounded on \(L^2(\Omega)\) by Lemma~\ref{lem:N-bounded}, the operator \(bR\mathcal{N}\) is a bounded perturbation of \(\mathcal{A}\) for each fixed \(R\ge 0\). The analyticity established in Lemma~\ref{lem:Analytic-A} therefore carries over to the full family \((\mathcal{L}_R)_{R\ge 0}\).

\begin{lem}[Analytic semigroup]
\label{lem:LR-analytic}
For each fixed \(R\ge0\), \(\mathcal L_R\) generates an analytic \(C_0\)-semigroup on \(L^2(\Omega)\), and \(\mathcal{D}(\mathcal L_R)=\mathcal{D}(\mathcal A)\).
\end{lem}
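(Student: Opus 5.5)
The plan is to use the bounded perturbation theorem for analytic semigroups. By Lemma~\ref{lem:N-bounded}, \(\mathcal N\) is a bounded linear operator on \(L^2(\Omega)\) with \(\|\mathcal N\|\le C_{\mathcal N}=(1-\mu)+\mu\sqrt{C_p}\). Hence, for each fixed \(R\ge 0\), the operator \(B_R:=bR\,\mathcal N\) is bounded with \(\|B_R\|\le bRC_{\mathcal N}\). Lemma~\ref{lem:Analytic-A} gives that \(\mathcal A\) generates an analytic \(C_0\)-semigroup on \(L^2(\Omega)\).

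\textbf{Step 1: \(B_R\) is a relatively bounded perturbation.} A bounded operator is \(\mathcal A\)-bounded with \(\mathcal A\)-bound \(0\), since
\[
\|B_R n\|\le 0\cdot\|\mathcal A n\|+bRC_{\mathcal N}\|n\|\quad\text{for all } n\in\mathcal D(\mathcal A).
\]
We may therefore invoke the same perturbation result used in Lemma~\ref{lem:Analytic-A}, \cite[Theorem~III.2.10]{Engel-Nagel}, with \(\mathcal A\) as the unperturbed analytic generator and \(B_R\) as the perturbation. This shows that \(\mathcal L_R=\mathcal A+B_R\), defined on \(\mathcal D(\mathcal A)\), generates an analytic \(C_0\)-semigroup. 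Alternatively, one can use \cite[Theorem~III.1.3]{Engel-Nagel} together with the corresponding statement for analytic semigroups.

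\textbf{Step 2: the domain.} This is immediate. The sum \(\mathcal A+B_R\) is defined on \(\mathcal D(\mathcal A)\cap\mathcal D(B_R)=\mathcal D(\mathcal A)\cap L^2(\Omega)=\mathcal D(\mathcal A)\). It is also closed, because a closed operator plus a bounded operator is closed.

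\textbf{Main obstacle.} There is essentially none. The only point needing care is to state correctly the version of the perturbation theorem that preserves analyticity, not merely generation. If one prefers a direct argument, the sectorial resolvent estimate transfers as follows. For \(\lambda\) in a sector with \(|\lambda|\) large, the analyticity of \(e^{\mathcal A t}\) gives
\[
\|R(\lambda,\mathcal A)\|\le \frac{M}{|\lambda-\omega'|}.
\]
Then \(\|B_RR(\lambda,\mathcal A)\|\le \tfrac12\) for \(|\lambda|\) large. The Neumann series
\[
R(\lambda,\mathcal L_R)=R(\lambda,\mathcal A)\sum_{k\ge0}\bigl(B_RR(\lambda,\mathcal A)\bigr)^k
\]
therefore converges and yields
\[
\|R(\lambda,\mathcal L_R)\|\le \frac{2M}{|\lambda-\omega'|}
\]
on a shifted sector. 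This is exactly the characterization of generators of analytic semigroups.
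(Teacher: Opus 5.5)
Your proposal is correct and follows the paper's proof exactly. Boundedness of $bR\,\mathcal N$ (Lemma~\ref{lem:N-bounded}) makes it $\mathcal A$-bounded with $\mathcal A$-bound $0$, and \cite[Theorem~III.2.10]{Engel-Nagel} applied to the analytic generator $\mathcal A$ from Lemma~\ref{lem:Analytic-A} then gives analyticity of $\mathcal L_R$ with $\mathcal D(\mathcal L_R)=\mathcal D(\mathcal A)$; your optional Neumann-series resolvent argument is also valid and just unpacks that theorem in the bounded case.
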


\begin{proof}
Since \(bR\mathcal N\) is bounded on \(L^2(\Omega)\), it is \(\mathcal A\)-bounded with \(\mathcal A\)-bound \(0\). Hence \(\mathcal L_R=\mathcal A+bR\mathcal N\) with \(\mathcal{D}(\mathcal L_R)=\mathcal{D}(\mathcal A)\) generates an analytic \(C_0\)-semigroup on \(L^2(\Omega)\) by \cite[Theorem~III.2.10]{Engel-Nagel}.
\end{proof}

In addition, the spectral analysis in the later discussion relies on duality arguments involving the adjoint operators \(\mathcal{A}^*\) and \(\mathcal{L}_R^*\). Specifically, the adjoint eigenfunctions are essential for determining the sign of the spectral bound \(s(\mathcal{L}_R)\). We now identify the domains and actions of these operators.

\begin{lem}[Adjoint of \(\mathcal A\) and \(\mathcal L_R\)]
\label{lem:adjoint-A-LR}
The adjoint of \(\mathcal A\) is given by
\[
\mathcal A^*\phi = m\,\phi_{xx} + v(x)\phi_x - d\,\chi_\alpha\phi, \quad \mathcal{D}(\mathcal A^*) = \{\phi\in H^2(\Omega):\phi_x(0)=\phi_x(1)=0\}.
\]
Moreover, for each fixed \(R\ge0\), \(\mathcal L_R^* = \mathcal A^* + bR\,\mathcal N^*\) with \(\mathcal{D}(\mathcal L_R^*) = \mathcal{D}(\mathcal A^*)\), where
\[
(\mathcal N^*\phi)(x) = (1-\mu)\chi_\alpha(x)\phi(x) + \mu\,\chi_\alpha(x)\int_0^1 p(y;x)\phi(y)\,dy.
\]
In particular, \(\mathcal{D}(\mathcal L_R^*) \subset H^2(\Omega) \hookrightarrow C^1(\overline{\Omega})\).
\end{lem}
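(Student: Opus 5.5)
The adjoint of \(\mathcal A\) is identified in two steps: first show that the candidate operator \(\mathcal B\phi := m\phi_{xx}+v\phi_x-d\chi_\alpha\phi\) on \(\mathcal{D}(\mathcal B)=\{\phi\in H^2(\Omega):\phi_x(0)=\phi_x(1)=0\}\) satisfies \(\mathcal B\subset\mathcal A^*\), then upgrade the inclusion to equality using a resolvent argument.

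\emph{Inclusion.} For \(n\in\mathcal{D}(\mathcal A)\) and \(\phi\in\mathcal{D}(\mathcal B)\), integrate by parts twice in the diffusion term and once in the advection term:
\[
\langle \mathcal A n,\phi\rangle = \int_0^1 \bigl(-d\chi_\alpha n\phi + v n\,\phi_x + m\, n\,\phi_{xx}\bigr)\,dx + \Bigl[-v n\phi + m n_x\phi - m n\phi_x\Bigr]_0^1 .
\]
The boundary terms all vanish: \(v(0)=v(1)=0\) by \(F_1\), \(n_x=0\) at the endpoints since \(n\in\mathcal{D}(\mathcal A)\), and \(\phi_x=0\) at the endpoints since \(\phi\in\mathcal{D}(\mathcal B)\). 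Hence \(\langle\mathcal A n,\phi\rangle=\langle n,\mathcal B\phi\rangle\), so \(\mathcal B\subset\mathcal A^*\). Note that the degeneracy \(v(0)=v(1)=0\) is exactly what makes the Neumann condition self-dual here; without it the advection boundary term would force a Robin condition on \(\phi\).

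\emph{Equality.} This is the main step. Repeat the proof of Lemma~\ref{lem:PropertiesA} for \(\mathcal B\): it has the same structure (diffusion, a first-order term with bounded coefficient, and a bounded zeroth-order term), so for \(\lambda\) large the associated form is coercive on \(H^1(\Omega)\). Lax--Milgram together with elliptic regularity then gives that \(\lambda-\mathcal B\) is surjective from \(\mathcal{D}(\mathcal B)\) onto \(L^2(\Omega)\). For the same large \(\lambda\), \(\lambda\in\rho(\mathcal A)\) by Lemma~\ref{lem:PropertiesA}, hence \(\lambda\in\rho(\mathcal A^*)\) and \(\lambda-\mathcal A^*\) is injective. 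Now take \(\psi\in\mathcal{D}(\mathcal A^*)\) and choose \(\phi\in\mathcal{D}(\mathcal B)\) with \((\lambda-\mathcal B)\phi=(\lambda-\mathcal A^*)\psi\). Since \(\mathcal B\subset\mathcal A^*\), this reads \((\lambda-\mathcal A^*)(\phi-\psi)=0\), and injectivity gives \(\psi=\phi\in\mathcal{D}(\mathcal B)\). Therefore \(\mathcal A^*=\mathcal B\).

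\emph{The operator \(\mathcal L_R^*\).} \(\mathcal N\) is bounded by Lemma~\ref{lem:N-bounded}, so \((\mathcal A+bR\mathcal N)^*=\mathcal A^*+bR\mathcal N^*\) on \(\mathcal{D}(\mathcal A^*)\); this is the standard fact that adding a bounded operator commutes with taking adjoints. The formula for \(\mathcal N^*\) follows from Fubini's theorem, which is justified because \(p\in L^2(\Omega\times\Omega)\) and the integrand is therefore in \(L^1\):
\[
\int_0^1\!\!\int_0^1 \chi_\alpha(y)n(y)p(x;y)\,dy\,\phi(x)\,dx=\int_0^1 n(y)\,\chi_\alpha(y)\int_0^1 p(x;y)\phi(x)\,dx\,dy .
\]
Relabeling the variables gives the stated integral term, and the multiplication term \((1-\mu)\chi_\alpha\) is self-adjoint. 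Finally, the embedding \(H^2(\Omega)\hookrightarrow C^1(\overline\Omega)\) is the one-dimensional Sobolev embedding.
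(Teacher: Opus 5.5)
Your proof is correct, but it obtains the hard inclusion \(\mathcal D(\mathcal A^*)\subset\{\phi\in H^2(\Omega):\phi_x(0)=\phi_x(1)=0\}\) by a different route from the paper.

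\textbf{The paper's route.} It argues directly. For \(\phi\in\mathcal D(\mathcal A^*)\) it tests against \(n\in C_c^\infty(0,1)\) to get \(m\phi_{xx}+v\phi_x-d\chi_\alpha\phi=g\) distributionally. It then invokes regularity to conclude \(\phi\in H^2\). Finally it reads off \(\phi_x(0)=\phi_x(1)=0\) from the surviving boundary term \(m[n\phi_x]_0^1\), since the traces \(n(0),n(1)\) of elements of \(\mathcal D(\mathcal A)\) are arbitrary.

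\textbf{Your route.} You prove only \(\mathcal B\subset\mathcal A^*\). You then close the gap with the abstract \emph{surjective restriction plus injective adjoint} argument at a common \(\lambda>\omega\).

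\textbf{Comparison.} The paper's argument is more elementary and shows explicitly where the adjoint boundary condition comes from. However, its regularity step starts from a \(\phi\) known only to lie in \(L^2\) and to satisfy the equation distributionally. So it really needs a short one-dimensional bootstrap, for instance \((m\phi_x+v\phi)_x=g+(v'+d\chi_\alpha)\phi\in L^2\Rightarrow\phi\in H^1\Rightarrow\phi\in H^2\), rather than standard \(H^1\)-based elliptic regularity.

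Your route never needs regularity of an arbitrary element of \(\mathcal D(\mathcal A^*)\). Regularity is invoked only for the Lax--Milgram solution of the adjoint problem, which is exactly the setting of Lemma~\ref{lem:PropertiesA}. The same threshold \(\omega=\|v\|_{L^\infty}^2/(2m)\) gives coercivity of the adjoint form \(\int_0^1[m\phi_xw_x-v\phi_xw+(\lambda+d\chi_\alpha)\phi w]\,dx\). The price is redoing that coercivity and regularity step and using \(\lambda\in\rho(\mathcal A)\). Injectivity of \(\lambda-\mathcal A^*\) then follows from \(\ker(\lambda-\mathcal A^*)=\operatorname{ran}(\lambda-\mathcal A)^\perp=\{0\}\).

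Your treatment of \(\mathcal N^*\) and \(\mathcal L_R^*\) (Fubini and bounded perturbation) matches the paper's, with a slightly more explicit justification of Fubini.
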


\begin{proof}
We first identify the adjoint of \(\mathcal N\). For \(n,\phi\in L^2(\Omega)\),
\begin{align*}
\langle \mathcal N n,\phi\rangle
&=
(1-\mu)\int_0^1 \chi_\alpha(x)n(x)\phi(x)\,dx
+
\mu\int_0^1\left(\int_0^1 \chi_\alpha(y)n(y)p(x;y)\,dy\right)\phi(x)\,dx \\
&=
\int_0^1 n(x)\left[(1-\mu)\chi_\alpha(x)\phi(x) + \mu\,\chi_\alpha(x)\int_0^1 p(y;x)\phi(y)\,dy\right]dx,
\end{align*}
which gives the stated formula for \(\mathcal N^*\phi\).

We next identify the adjoint of \(\mathcal A\). Let \(\phi \in \mathcal{D}(\mathcal A^*)\), and let \(g \in L^2(\Omega)\) be the element satisfying \(\langle \mathcal A n, \phi\rangle = \langle n, g\rangle\) for all \(n \in \mathcal{D}(\mathcal A)\). Taking \(n \in C_c^\infty(0,1)\), integration by parts yields
\[
\int_0^1 \bigl(m n_{xx}-\partial_x(vn)-d\chi_\alpha n\bigr)\phi\,dx = \int_0^1 n\bigl(m\phi_{xx}+v\phi_x-d\chi_\alpha\phi\bigr)\,dx,
\]
and hence in the distributional sense \(m\phi_{xx} + v\phi_x - d\chi_\alpha\phi = g \in L^2(\Omega)\). Since \(m > 0\), \(v \in C^1(\overline{\Omega})\), and \(\chi_\alpha \in L^\infty(\Omega)\), one-dimensional elliptic regularity gives \(\phi \in H^2(\Omega)\) (see \cite[Theorem~8.30]{san2022}).

Now let \(n \in \mathcal{D}(\mathcal A)\). Integration by parts gives
\[
\langle \mathcal A n,\phi\rangle = \int_0^1 n\bigl(m\phi_{xx}+v\phi_x-d\chi_\alpha\phi\bigr)\,dx - m\bigl[n\,\phi_x\bigr]_0^1,
\]
where the transport boundary term vanishes because \(v(0) = v(1) = 0\). The identity \(\langle \mathcal A n,\phi\rangle = \langle n, g\rangle\) then forces the boundary term \(m[n\,\phi_x]_0^1\) to vanish for all \(n \in \mathcal{D}(\mathcal A)\). Since \(n(0)\) and \(n(1)\) are arbitrary for functions in \(\mathcal{D}(\mathcal A)\), we conclude that \(\phi_x(0) = \phi_x(1) = 0\), and therefore
\[
\mathcal{D}(\mathcal A^*) \subset \{\phi \in H^2(\Omega) : \phi_x(0) = \phi_x(1) = 0\}.
\]

Conversely, if \(\phi \in H^2(\Omega)\) with \(\phi_x(0) = \phi_x(1) = 0\), repeating the integration by parts shows that
\[
\langle \mathcal A n,\phi\rangle = \langle n, m\phi_{xx} + v\phi_x - d\chi_\alpha\phi\rangle
\]
for every \(n \in \mathcal{D}(\mathcal A)\), and therefore \(\phi \in \mathcal{D}(\mathcal A^*)\) with \(\mathcal A^*\phi = m\phi_{xx} + v\phi_x - d\chi_\alpha\phi\). This proves the formula and domain for \(\mathcal A^*\).

Finally, since \(bR\mathcal N\) is bounded on \(L^2(\Omega)\), we have \(\mathcal{D}(\mathcal L_R^*) = \mathcal{D}(\mathcal A^*)\) and \(\mathcal L_R^* = \mathcal A^* + bR\,\mathcal N^*\). For the forward inclusion, if \(\phi \in \mathcal{D}(\mathcal L_R^*)\), then for all \(n \in \mathcal{D}(\mathcal A)\),
\[
\langle \mathcal A n, \phi\rangle = \langle \mathcal L_R n, \phi\rangle - \langle bR\mathcal N n, \phi\rangle = \langle n, \mathcal L_R^*\phi - bR\mathcal N^*\phi\rangle,
\]
hence \(\phi \in \mathcal{D}(\mathcal A^*)\). Conversely, if \(\phi \in \mathcal{D}(\mathcal A^*)\), then \(\langle \mathcal L_R n, \phi\rangle = \langle \mathcal A n, \phi\rangle + \langle bR\mathcal N n, \phi\rangle = \langle n, \mathcal A^*\phi + bR\mathcal N^*\phi\rangle\) for all \(n \in \mathcal{D}(\mathcal A)\), which gives \(\phi \in \mathcal{D}(\mathcal L_R^*)\). The final assertion follows from the Sobolev embedding \(H^2(\Omega) \hookrightarrow C^1(\overline{\Omega})\) (see \cite[Theorem~5.3.1]{hillen2023}).
\end{proof}

We can now state the spectral properties of \((\mathcal L_R)_{R\ge 0}\).

\begin{prop}
\label{prop:spectral-sign-LR}
Assume \(F_1\)--\(F_3\). Then the following assertions hold for the family \((\mathcal L_R)_{R\ge 0}\).

\begin{enumerate}
\item[(i)] For every \(R\ge 0\), the operator \(\mathcal{L}_{R}\) has a compact and positive resolvent.

\item[(ii)] For every \(R\ge 0\), the spectral bound \(s(\mathcal{L}_{R})\) is a simple real eigenvalue of \(\mathcal{L}_{R}\) admitting a strictly positive eigenfunction. Likewise, \(s(\mathcal{L}_{R})\) is an eigenvalue of the adjoint operator \(\mathcal{L}_{R}^*\) with a strictly positive eigenfunction.

\item[(iii)] \(s(\mathcal L_{d/b}) = 0\).

\item[(iv)] The map \(R \mapsto s(\mathcal{L}_R)\) is strictly increasing on \([0,\infty)\).
\end{enumerate}

Consequently,
\[
s(\mathcal{L}_R)
\begin{cases}
< 0, & R < d/b, \\
= 0, & R = d/b, \\
> 0, & R > d/b.
\end{cases}
\]
\end{prop}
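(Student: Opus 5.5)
Parts (i) and (ii) come first. For (i), take \(\lambda\) larger than \(\omega\) plus the norm \(bR\,C_\mathcal N\). Then \((\lambda-\mathcal L_R)^{-1}\) exists and maps \(L^2(\Omega)\) into \(\mathcal D(\mathcal A)\subset H^2(\Omega)\). By Rellich, \(H^2(\Omega)\hookrightarrow L^2(\Omega)\) compactly, so the resolvent is compact. For positivity, note that \(e^{\mathcal A t}\) is positive by Lemma~\ref{lem:Semigroup-A-Positive-Irreducible} and \(bR\mathcal N\) is a bounded positive operator by Lemma~\ref{lem:N-bounded}. The Dyson--Phillips series then shows \(e^{\mathcal L_R t}\ge e^{\mathcal At}\ge 0\), so the resolvent is positive as a Laplace transform.

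For (ii), the same domination \(e^{\mathcal L_Rt}f\ge e^{\mathcal At}f\) together with Corollary~\ref{cor:strict-positivity-A} gives \(e^{\mathcal L_Rt}f>0\) a.e. for \(0\le f\not\equiv0\). Hence the semigroup is irreducible, and its resolvent is positivity improving. Since the resolvent is compact, the spectrum consists of eigenvalues only. Because \(\mathcal L_R\) is a real operator, \(s(\mathcal L_R)>-\infty\); if needed I would check this through the existence of a positive eigenvector of the positive compact resolvent (Krein--Rutman, with spectral radius positive by irreducibility / de Pagter's theorem). The Krein--Rutman / de Pagter theory for positive irreducible semigroups with compact resolvent then shows that \(s(\mathcal L_R)\) is a simple (algebraically simple) pole of the resolvent with strictly positive eigenfunction \(\hat n_R\). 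The adjoint semigroup is also irreducible, so the same argument gives a strictly positive adjoint eigenfunction \(\phi_R\in\mathcal D(\mathcal A^*)\), identified in Lemma~\ref{lem:adjoint-A-LR}.

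For (iii), use the adjoint and guess \(\phi\equiv1\). By Lemma~\ref{lem:adjoint-A-LR}, \(\mathcal A^*1=-d\chi_\alpha\). Using \(F_2\), \(\int_0^1p(y;x)\,dy=1\), so \(\mathcal N^*1=(1-\mu)\chi_\alpha+\mu\chi_\alpha=\chi_\alpha\). Therefore \(\mathcal L_{d/b}^*1=-d\chi_\alpha+d\chi_\alpha=0\), and \(1\in\mathcal D(\mathcal A^*)\) trivially. Let \(\hat n\) be the positive eigenfunction from (ii) for \(s:=s(\mathcal L_{d/b})\). Then
\[
s\langle\hat n,1\rangle=\langle\mathcal L_{d/b}\hat n,1\rangle=\langle\hat n,\mathcal L^*_{d/b}1\rangle=0,
\]
and \(\langle \hat n,1\rangle>0\), so \(s=0\).

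For (iv), let \(R_1<R_2\), with \(\hat n_1>0\) the eigenfunction of \(\mathcal L_{R_1}\) and \(\phi_2>0\) the adjoint eigenfunction of \(\mathcal L_{R_2}\). Pairing gives
\[
(s(\mathcal L_{R_2})-s(\mathcal L_{R_1}))\langle\hat n_1,\phi_2\rangle=b(R_2-R_1)\langle\mathcal N\hat n_1,\phi_2\rangle .
\]
The left inner product is positive. On the right, \(\mathcal N\hat n_1\ge(1-\mu)\chi_\alpha\hat n_1+\mu\mathcal K\hat n_1\), and this is nonzero: if \(\mu<1\) the first term is positive on \((0,\alpha)\), and if \(\mu=1\), \(\int\mathcal K\hat n_1=\int\chi_\alpha\hat n_1>0\). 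Since \(\phi_2>0\), the right side is strictly positive, so \(s(\mathcal L_{R_2})>s(\mathcal L_{R_1})\). The trichotomy then follows from (iii) and (iv).

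The main obstacle is (ii): carefully invoking the right abstract result (e.g. Engel--Nagel C-III / Arendt et al.) for simplicity and strict positivity, and confirming \(s(\mathcal L_R)>-\infty\) for the non-self-adjoint operator. I would handle the latter via the positive spectral radius of the compact irreducible resolvent.
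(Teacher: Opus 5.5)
Your proposal is correct and follows essentially the same route as the paper. Compactness comes from $\mathcal D(\mathcal A)\subset H^2(\Omega)$, and positivity and irreducibility of $e^{[\mathcal L_R]t}$ come from domination by $e^{\mathcal A t}$ together with Corollary~\ref{cor:strict-positivity-A}. Krein--Rutman then gives (ii), the adjoint test function $\mathbf 1$ gives (iii), and pairing with the adjoint eigenfunction of $\mathcal L_{R_2}$ gives (iv).

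The differences are cosmetic. You use Dyson--Phillips instead of citing the domination corollary, and you split on $\mu$ where the paper uses $\int_0^1\mathcal N\varphi_1\,dx=\int_0^1\chi_\alpha\varphi_1\,dx>0$. Note, however, that $s(\mathcal L_R)>-\infty$ does not follow from $\mathcal L_R$ being real. Your fallback via de Pagter's theorem applied to the compact irreducible resolvent is the correct justification, a point the paper leaves implicit in its cited Krein--Rutman theorem.
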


\begin{proof}
Since \(\mathcal{D}(\mathcal L_R)=\mathcal{D}(\mathcal A)=\{n\in H^2(\Omega):\partial_x n(0)=\partial_x n(1)=0\}\subset H^2(\Omega)\) and the embedding \(H^2(\Omega)\hookrightarrow L^2(\Omega)\) is compact, \(\mathcal L_R\) has compact resolvent by \cite[Proposition~II.4.25]{Engel-Nagel}.

By Lemma~\ref{lem:Semigroup-A-Positive-Irreducible}, \(\mathcal A\) generates a positive \(C_0\)-semigroup on \(L^2(\Omega)\), and by Lemma~\ref{lem:N-bounded}, \(bR\mathcal N\) is a bounded positive operator on \(L^2(\Omega)\). Hence \(\mathcal L_R\) generates a positive \(C_0\)-semigroup \((e^{[\mathcal L_R]t})_{t\ge 0}\) dominating \((e^{\mathcal A t})_{t\ge 0}\) by \cite[Corollary~VI.1.11]{Engel-Nagel}:
\begin{equation}\label{eq:semigroup-domination}
0\le e^{\mathcal A t}f\le e^{[\mathcal L_R]t}f\qquad\text{for all }f\in L^2_+(\Omega),\ t\ge 0.
\end{equation}
The positivity of the resolvent of \(\mathcal L_R\) then follows from the integral representation of the resolvent \cite[Theorem~II.1.10]{Engel-Nagel}
\[
(\lambda I-\mathcal{L}_R)^{-1}f=\int_0^\infty e^{-\lambda t}e^{[\mathcal L_R]t}f\,dt,\qquad \lambda>\omega_0(\mathcal{L}_R),
\]
since the integrand is nonnegative for \(f\ge 0\). This proves (i).

The irreducibility of \((e^{[\mathcal{L}_{R}]t})_{t\ge 0}\) follows from that of \((e^{\mathcal{A}t})_{t\ge 0}\) in Lemma~\ref{lem:Semigroup-A-Positive-Irreducible}, together with the domination \eqref{eq:semigroup-domination}: for every \(0\le f\in L^2(\Omega)\) with \(f\not\equiv 0\), every \(t>0\), and every measurable \(U\subset\Omega\) with \(|U|>0\),
\[
\int_{U}e^{[\mathcal{L}_{R}]t}f\,dx\ge\int_{U}e^{\mathcal{A}t}f\,dx>0,
\]
where the strict inequality follows from Corollary~\ref{cor:strict-positivity-A}. Hence \((e^{[\mathcal L_R]t})_{t\ge0}\) is positive and irreducible.

Combined with the compact resolvent from (i), the Krein--Rutman theorem implies that \(s(\mathcal L_R)\) is an eigenvalue of \(\mathcal L_R\) with a strictly positive eigenfunction \(\varphi_R>0\), and likewise an eigenvalue of \(\mathcal L_R^*\) with a strictly positive eigenfunction \(\psi_R>0\) (see \cite[Theorem~10.2.5]{arendtheat}). The simplicity of \(s(\mathcal L_R)\), with \(\ker(\mathcal L_R - s(\mathcal L_R)I) = \operatorname{span}\{\varphi_R\}\), follows from \cite[Theorem~10.2.6]{arendtheat}. This proves (ii).

We now prove (iii). We first identify the action of the adjoint \(\mathcal A^*\) on the constant function \(\mathbf 1 \equiv 1\). For every \(n\in\mathcal{D}(\mathcal A)\),
\[
\langle \mathcal A n,\mathbf 1\rangle = \int_0^1 \bigl(-d\,\chi_\alpha n - \partial_x(vn) + m\,n_{xx}\bigr)\,dx = -d\int_0^1 \chi_\alpha(x)n(x)\,dx = \langle n, -d\chi_\alpha\rangle,
\]
by the assumption \(F_1\) and boundary condition \eqref{eq:semiinicon}. Similarly, for the nonlocal operator \(\mathcal N\), we have for every \(n \in L^2(\Omega)\),
\begin{align*}
\langle \mathcal N n,\mathbf 1\rangle 
&=(1-\mu)\int_0^1 \chi_\alpha(x)n(x)\,dx+\mu\int_0^1 \chi_\alpha(y)n(y)\Bigl(\int_0^1 p(x;y)\,dx\Bigr)\,dy \\
&=\int_0^1 \chi_\alpha(y)n(y)\,dy=\langle n,\chi_\alpha\rangle,
\end{align*}
by assumption \(F_2\). It follows that \(\mathbf 1 \in \mathcal{D}(\mathcal A^*)\) with
\begin{align}
\label{eq:adjoint-on-1}
\mathcal A^* \mathbf 1 = -d\chi_\alpha \qquad\text{and}\qquad \mathcal N^* \mathbf 1 = \chi_\alpha.
\end{align}
Hence, for \(R = d/b\),
\begin{align}
\label{eq:L^*R=d/b1=0}
\mathcal L_{d/b}^* \mathbf 1 = \mathcal A^* \mathbf 1 + d\,\mathcal N^* \mathbf 1 = -d\chi_\alpha + d\chi_\alpha = 0.
\end{align}
Let \(\varphi_{d/b} > 0\) satisfy \(\mathcal L_{d/b}\varphi_{d/b} = s(\mathcal L_{d/b})\varphi_{d/b}\). Pairing with \(\mathbf 1\) and using \eqref{eq:L^*R=d/b1=0} gives
\[
s(\mathcal L_{d/b})\int_0^1 \varphi_{d/b}(x)\,dx = \langle\mathcal L_{d/b}\varphi_{d/b}, \mathbf 1\rangle = \langle\varphi_{d/b}, \mathcal L_{d/b}^* \mathbf 1\rangle = 0.
\]
Since \(\varphi_{d/b} > 0\), \(\int_0^1 \varphi_{d/b}(x)\,dx > 0\), and hence \(s(\mathcal L_{d/b}) = 0\). This proves (iii).

It remains to prove (iv). Let \(R_2>R_1\ge 0\), write \(s_i:=s(\mathcal L_{R_i})\), and let \(\varphi_1, \psi_2 > 0\) satisfy
\[
\mathcal L_{R_1}\varphi_1 = s_1\varphi_1 \quad\text{and}\quad \mathcal L_{R_2}^*\psi_2 = s_2\psi_2.
\]
Since \(\mathcal L_{R_2} - \mathcal L_{R_1} = b(R_2 - R_1)\mathcal N\), applying this difference to \(\varphi_1\) gives
\[
\mathcal L_{R_2}\varphi_1 - s_1\varphi_1 = b(R_2-R_1)\mathcal N\varphi_1.
\]
Taking the \(L^2\) inner product with \(\psi_2\) and using
\[
\langle \mathcal L_{R_2}\varphi_1, \psi_2\rangle = \langle\varphi_1, \mathcal L_{R_2}^*\psi_2\rangle = s_2\langle\varphi_1,\psi_2\rangle,
\]
we obtain
\begin{align}
\label{eq:s2-s1}
(s_2-s_1)\langle \varphi_1,\psi_2\rangle=b(R_2-R_1)\langle \mathcal N\varphi_1,\psi_2\rangle.
\end{align}
Since \(\varphi_1,\psi_2>0\), one has \(\langle \varphi_1,\psi_2\rangle>0\). Moreover, \(\mathcal N\varphi_1\ge 0\) by the positivity of \(\mathcal N\), and
\[
\int_0^1 (\mathcal N\varphi_1)(x)\,dx = \int_0^1 \chi_\alpha(x)\varphi_1(x)\,dx > 0
\]
shows \(\mathcal N\varphi_1 \not\equiv 0\); therefore \(\langle \mathcal N\varphi_1,\psi_2\rangle > 0\) and \eqref{eq:s2-s1} yields \(s_2 - s_1 > 0\). This proves that \(R\mapsto s(\mathcal{L}_R)\) is strictly increasing. The sign conclusions follow immediately from strict monotonicity together with \(s(\mathcal L_{d/b})=0\).
\end{proof}

\begin{cor}[Exponential stability]
\label{cor:exp-stability-LR-below-threshold}
Let \(R_*<d/b\). Then there exists a constant \(M\ge 1\) such that
\[
\|e^{[\mathcal{L}_{R_*}]t}\|_{\mathcal L(L^2(\Omega))}\le M e^{s(\mathcal{L}_{R_*}) t}\qquad\text{for all }t\ge 0.
\]
\end{cor}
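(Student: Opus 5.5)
The bound follows from the fact that for an analytic semigroup the growth bound equals the spectral bound, combined with the spectral structure of \(\mathcal L_{R_*}\) from Proposition~\ref{prop:spectral-sign-LR}. Since \(\mathcal L_{R_*}\) generates an analytic \(C_0\)-semigroup (Lemma~\ref{lem:LR-analytic}), it is eventually norm continuous. For such semigroups the spectral mapping theorem gives \(\omega_0(\mathcal L_{R_*})=s(\mathcal L_{R_*})\) \cite[Corollary~IV.3.12]{Engel-Nagel}. By definition of the growth bound, for every \(\epsilon>0\) there is \(M_\epsilon\) with \(\|e^{[\mathcal L_{R_*}]t}\|\le M_\epsilon e^{(s+\epsilon)t}\), where \(s:=s(\mathcal L_{R_*})\). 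This alone only gives an \(\epsilon\)-loss, so the remaining step is to remove \(\epsilon\).

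To remove the loss, I will use the spectral gap coming from the compact resolvent. By Proposition~\ref{prop:spectral-sign-LR}(i), \(\sigma(\mathcal L_{R_*})\) consists of isolated eigenvalues of finite algebraic multiplicity. Because \(\mathcal L_{R_*}\) is sectorial, only finitely many of them lie in any right half-plane \(\{\operatorname{Re}\lambda> s-1\}\). By Proposition~\ref{prop:spectral-sign-LR}(ii), \(s\) is a simple eigenvalue. For a positive irreducible semigroup with compact resolvent, the peripheral spectrum is cyclic, and it is therefore contained in \(s+i\mathbb Z\cdot\beta\) for some \(\beta\). An analytic semigroup cannot have infinitely many eigenvalues on the vertical line \(\operatorname{Re}\lambda=s\). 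Hence the peripheral spectrum reduces to \(\{s\}\), and there is \(\delta>0\) with \(\sigma(\mathcal L_{R_*})\setminus\{s\}\subset\{\operatorname{Re}\lambda\le s-\delta\}\) \cite[Theorem~10.2.6]{arendtheat}.

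Next let \(P\) be the spectral (Riesz) projection onto \(\ker(\mathcal L_{R_*}-s)=\operatorname{span}\{\varphi_{R_*}\}\). Using the positive adjoint eigenfunction \(\psi_{R_*}\), it has the explicit form \(Pf=\langle f,\psi_{R_*}\rangle\varphi_{R_*}/\langle\varphi_{R_*},\psi_{R_*}\rangle\); simplicity of \(s\) means the pole is of order one, so \(P\) is rank one. On \(\operatorname{ran}P\) the semigroup acts as multiplication by \(e^{st}\), giving \(\|e^{[\mathcal L_{R_*}]t}P\|\le\|P\|e^{st}\). On \(\operatorname{ran}(I-P)\), the restricted generator is again analytic with spectral bound at most \(s-\delta\). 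The first step applied with \(\epsilon=\delta/2\) then yields \(\|e^{[\mathcal L_{R_*}]t}(I-P)\|\le M'e^{(s-\delta/2)t}\le M'e^{st}\).

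Adding the two pieces gives the bound with \(M:=\max\{1,\|P\|+M'\|I-P\|\}\), valid for all \(t\ge0\). The main obstacle is justifying that \(s\) is strictly dominant, i.e.\ that no other eigenvalue lies on \(\operatorname{Re}\lambda=s\); once that holds, the rest is standard bookkeeping. If the cyclicity citation proves awkward, a fallback is to note that \(R_*<d/b\) gives \(s<0\), which is all that later sections actually need. In that case, taking \(\epsilon=|s|/2\) in the first step still yields uniform exponential decay at rate \(s/2\).
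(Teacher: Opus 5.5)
Your argument is correct. Its first step is the whole of the paper's written proof. The paper takes \(s(\mathcal L_{R_*})<0\) from Proposition~\ref{prop:spectral-sign-LR} and gets \(\omega_0(\mathcal L_{R_*})=s(\mathcal L_{R_*})\) from analyticity via \cite[Corollary~IV.3.12]{Engel-Nagel}. It then attributes the displayed estimate to \cite[Proposition~VI.1.14 \& Theorem~VI.1.15]{Engel-Nagel} without further argument.

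As you observe, \(\omega_0=s\) alone only gives \(M_\epsilon e^{(s+\epsilon)t}\). Your additional steps are what remove the \(\epsilon\):
\begin{itemize}
\item finiteness of the spectrum in \(\{\operatorname{Re}\lambda>s-1\}\), from compact resolvent plus sectoriality;
\item strict dominance of \(s\), from cyclicity of the boundary spectrum of an irreducible positive semigroup;
\item the splitting by the rank-one Riesz projection \(P=\langle\cdot,\psi_{R_*}\rangle\varphi_{R_*}/\langle\varphi_{R_*},\psi_{R_*}\rangle\).
\end{itemize}
They also give the stronger asynchronous-exponential-growth bound \(\|e^{-st}e^{[\mathcal L_{R_*}]t}-P\|\le Ce^{-\delta t/2}\).

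The paper's route is shorter and suffices for every later use. As your fallback notes, Theorem~\ref{thm:washout} only needs decay of \(\|e^{[\mathcal L_{R_*}]t}\|\). Your route proves the estimate with the sharp exponent as stated. Since it never uses \(R_*<d/b\), it holds for every \(R\ge 0\).

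Two details need tightening.

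First, the proof of Proposition~\ref{prop:spectral-sign-LR}(ii) only establishes \(\ker(\mathcal L_{R_*}-s)=\operatorname{span}\{\varphi_{R_*}\}\). Geometric simplicity does not by itself make the pole first order, so ``simplicity means the pole is of order one'' needs a line of justification. The positive adjoint eigenfunction supplies it. If \((\mathcal L_{R_*}-s)u=\varphi_{R_*}\), pairing with \(\psi_{R_*}\) gives \(0=\langle u,(\mathcal L_{R_*}^*-s)\psi_{R_*}\rangle=\langle\varphi_{R_*},\psi_{R_*}\rangle>0\). So there is no Jordan chain and \(\operatorname{ran}P\) is one-dimensional.

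Second, for strict dominance you only need this property: \(s+i\beta\in\sigma(\mathcal L_{R_*})\) with \(\beta\ne 0\) forces \(s+ik\beta\in\sigma(\mathcal L_{R_*})\) for all \(k\in\mathbb Z\). That contradicts boundedness of the spectrum on \(\operatorname{Re}\lambda=s\). Check that the reference you cite actually contains this cyclicity statement, since the paper uses \cite[Theorem~10.2.6]{arendtheat} only for simplicity of \(s\).
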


\begin{proof}
By Proposition~\ref{prop:spectral-sign-LR}, \(R_* < d/b\) gives \(s(\mathcal L_{R_*}) < 0\). Since \(\mathcal{L}_{R_*}\) generates an analytic semigroup by Lemma \ref{lem:LR-analytic}, its growth bound equals its spectral bound by \cite[Corollary IV.3.12]{Engel-Nagel}. Therefore the semigroup generated by \(\mathcal{L}_{R_*}\) is exponentially stable, and the stated estimate follows (see \cite[Proposition VI.1.14 \& Theorem VI.1.15]{Engel-Nagel}).
\end{proof}

For the remainder of this section we denote by 
\[
N(t):=\int_0^1 n(x,t)\,dx,\quad N_\alpha(t):=\int_0^1 \chi_\alpha(x)n(x,t)\,dx
\]
the total and active (expression level below \(\alpha\)) biomass, respectively. Since $0\le\chi_\alpha\le 1$, one has $0\le N_\alpha(t)\le N(t)$ for all $t\ge 0$.

The following lemma shows that whenever $N_\alpha(t)\to 0$, the resource converges to its washout value. This fact is used in the proofs of both washout stability and weak persistence.

\begin{lem}[Resource convergence]
\label{lem:R-convergence}
Let \((n,R)\) be a nonnegative mild solution of \eqref{eq:semipde1}--\eqref{eq:semiinicon}. If
\(
N_\alpha(t) \longrightarrow 0\) as \(t\to\infty,
\)
then \(R(t)\to \theta/\eta\) as \(t\to\infty\).
\end{lem}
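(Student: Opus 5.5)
The plan is to treat \eqref{eq:semi-mild-R} as a scalar linear ODE for \(R\) with time-dependent coefficient \(\eta + bN_\alpha(t)\), and to compare it directly with the limiting equation \(\dot R = \theta - \eta R\).

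First we check that \(N_\alpha\) is continuous on \([0,\infty)\). Since \(n\in C([0,T],L^2(\Omega))\) for every \(T\), we have
\[
|N_\alpha(t)-N_\alpha(s)| \le \|\chi_\alpha\|\,\|n(t)-n(s)\|,
\]
and this tends to \(0\) as \(s\to t\). Hence the integrand in \eqref{eq:semi-mild-R} is continuous, so \(R\in C^1\) and satisfies
\[
\dot R = \theta - \eta R - bN_\alpha(t)R
\]
pointwise. Theorem~\ref{thm:global-solution} and \eqref{eq:range-of-R(t)} give \(0\le R(t)\le R_{\max}\).

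Next, set \(w(t):=R(t)-\theta/\eta\). Then
\[
\dot w = -\eta w - bN_\alpha(t)R(t).
\]
By variation of constants, for any \(t\ge t_0\ge 0\),
\[
w(t) = e^{-\eta(t-t_0)}w(t_0) - b\int_{t_0}^t e^{-\eta(t-s)}N_\alpha(s)R(s)\,ds.
\]

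Now fix \(\varepsilon>0\) and choose \(t_0\) such that \(N_\alpha(s)\le\varepsilon\) for all \(s\ge t_0\). The integral term is then bounded in absolute value by \(bR_{\max}\varepsilon/\eta\). Since \(|w(t_0)|\le R_{\max}+\theta/\eta\), the first term tends to \(0\). Therefore
\[
\limsup_{t\to\infty}|w(t)| \le \frac{bR_{\max}\varepsilon}{\eta}.
\]
Because \(\varepsilon>0\) was arbitrary, \(R(t)\to\theta/\eta\).

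The only delicate point is justifying that the mild formulation \eqref{eq:semi-mild-R} gives a genuine \(C^1\) solution, so that the variation-of-constants identity holds. One can also avoid differentiation altogether by verifying the identity directly from the integral equation, which is a linear Volterra equation with continuous coefficients. Beyond that, the argument is a routine comparison estimate. Nonnegativity of \(n\) and \(R\) is used only through the uniform bound \(R\le R_{\max}\), and one could even get an upper bound on \(w\) for free by dropping the nonpositive integral term.
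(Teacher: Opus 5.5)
Your proof is correct and follows essentially the same route as the paper: variation of constants for the scalar resource equation, the uniform bound \(R\le R_{\max}\), and an \(\varepsilon\)-argument on the convolution term. The differences are cosmetic: you restart the formula at \(t_0\) instead of splitting the integral from \(0\) at \(T_\varepsilon\), and you explicitly justify \(R\in C^1\) via continuity of \(N_\alpha\), which the paper leaves implicit.
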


\begin{proof}
By \eqref{eq:range-of-R(t)}, \(R(t)\le R_{\max}\) for all \(t\ge 0\). Set \(h(t):=bR(t)N_\alpha(t)\ge 0\); then \(0\le h(t)\le bR_{\max}N_\alpha(t)\), and \(h(t)\to 0\) as \(t\to\infty\) by hypothesis. Variation of constants applied to the scalar equation \(\dot R=\theta-\eta R-h(t)\) yields
\begin{align}
\label{eq:VoC-R}
R(t)=R_0\,e^{-\eta t}+\frac{\theta}{\eta}\bigl(1-e^{-\eta t}\bigr)-\int_{0}^{t}e^{-\eta(t-s)}h(s)\,ds.
\end{align}
The first two terms tend to \(0\) and \(\theta/\eta\) respectively. For the convolution, fix \(\varepsilon>0\) and pick \(T_\varepsilon\) such that \(h(s)<\varepsilon\) for \(s\ge T_\varepsilon\). By \eqref{eq:range-of-R(t)}, \(h(s)\le bR_{\max}N_\alpha(s)\) for all \(s\ge 0\), and hence \(H:=\sup_{s\in[0,T_\varepsilon]}h(s)<\infty\). Splitting the integral at \(T_\varepsilon\),
\[
\int_0^t e^{-\eta(t-s)}h(s)\,ds\le H\,e^{-\eta(t-T_\varepsilon)}\frac{1-e^{-\eta T_\varepsilon}}{\eta}+\frac{\varepsilon}{\eta}\bigl(1-e^{-\eta(t-T_\varepsilon)}\bigr),
\]
and therefore \(\limsup_{t\to\infty}\int_0^t e^{-\eta(t-s)}h(s)\,ds\le \varepsilon/\eta\). Since this holds for every \(\varepsilon>0\), the \(\limsup\) is zero, and hence \(\int_0^t e^{-\eta(t-s)}h(s)\,ds\to 0\) as \(t\to\infty\). Hence \(R(t)\to \theta/\eta\).
\end{proof}

The asymptotic arguments below involve differentiating the map \(t\mapsto\langle n(t),\phi\rangle\) for \(\phi\in\mathcal{D}(\mathcal{A}^*)\). Since mild solutions need not lie in \(\mathcal{D}(\mathcal{A})\), the following lemma provides the required justification.

\begin{lem}[Differentiability along adjoint test functions]
\label{lem:weak-diff-mild}
Let \((n,R)\) be a nonnegative mild solution of \eqref{eq:semipde1}--\eqref{eq:semiinicon} on \([0,T)\). For every \(\phi\in\mathcal{D}(\mathcal{A}^*)\), the map \(t\mapsto \langle n(t),\phi\rangle\) is continuously differentiable on \((0,T)\) and satisfies
\[
\frac{d}{dt}\langle n(t),\phi\rangle = \langle n(t),\mathcal{A}^*\phi\rangle + bR(t)\langle \mathcal{N}n(t),\phi\rangle.
\]
\end{lem}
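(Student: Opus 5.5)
The plan is to use the standard fact that mild solutions of an inhomogeneous Cauchy problem are weak solutions in the sense of Ball. Set \(f(s):=bR(s)\mathcal N[n(s)]\). Since \(R\in C[0,T)\), \(n\in C([0,T),L^2(\Omega))\), and \(\mathcal N\) is bounded by Lemma~\ref{lem:N-bounded}, we have \(f\in C([0,T),L^2(\Omega))\). Fix \(\phi\in\mathcal D(\mathcal A^*)\) and pair the mild formula \eqref{eq:semi-mild-n} with \(\phi\):
\[
\langle n(t),\phi\rangle=\langle e^{\mathcal A t}n_0,\phi\rangle+\int_0^t\langle e^{\mathcal A(t-s)}f(s),\phi\rangle\,ds .
\]

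Next I use the adjoint semigroup. On the Hilbert space \(L^2(\Omega)\), \((e^{\mathcal At})^*=e^{\mathcal A^*t}\) is a \(C_0\)-semigroup generated by \(\mathcal A^*\), whose action is identified in Lemma~\ref{lem:adjoint-A-LR}. So \(\langle e^{\mathcal A\tau}g,\phi\rangle=\langle g,e^{\mathcal A^*\tau}\phi\rangle\), and \(\tau\mapsto e^{\mathcal A^*\tau}\phi\) is \(C^1\) in norm with derivative \(e^{\mathcal A^*\tau}\mathcal A^*\phi\).

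I would avoid differentiating under the convolution directly. Instead I would show the integrated identity
\[
\langle n(t),\phi\rangle-\langle n_0,\phi\rangle=\int_0^t\bigl(\langle n(s),\mathcal A^*\phi\rangle+\langle f(s),\phi\rangle\bigr)ds .
\]

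\emph{Homogeneous term.} \(\langle e^{\mathcal At}n_0,\phi\rangle-\langle n_0,\phi\rangle=\int_0^t\langle e^{\mathcal As}n_0,\mathcal A^*\phi\rangle ds\), which follows from \(e^{\mathcal A^*t}\phi-\phi=\int_0^te^{\mathcal A^*s}\mathcal A^*\phi\,ds\).

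\emph{Convolution term.} Write \(e^{\mathcal A^*(t-s)}\phi-\phi=\int_s^t e^{\mathcal A^*(r-s)}\mathcal A^*\phi\,dr\). Apply Fubini on the triangle \(0\le s\le r\le t\), which is justified because the integrand is continuous and bounded. This gives
\[
\int_0^t\langle f(s),e^{\mathcal A^*(t-s)}\phi\rangle ds=\int_0^t\langle f(s),\phi\rangle ds+\int_0^t\Bigl\langle\int_0^re^{\mathcal A(r-s)}f(s)\,ds,\mathcal A^*\phi\Bigr\rangle dr .
\]
Adding the two pieces and recognizing the mild formula for \(n(r)\) inside the last pairing yields the integrated identity.

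The integrand \(s\mapsto\langle n(s),\mathcal A^*\phi\rangle+bR(s)\langle\mathcal N n(s),\phi\rangle\) is continuous on \([0,T)\), since \(n\), \(R\) are continuous and \(\mathcal N\) is bounded. The fundamental theorem of calculus then gives \(C^1\) regularity on \((0,T)\) together with the stated derivative formula. Nonnegativity of the solution is not needed.

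The main obstacle is the Fubini/adjoint-semigroup bookkeeping in the convolution term. I expect the key step to be identifying \((e^{\mathcal At})^*\) with the semigroup generated by \(\mathcal A^*\). I would cite the standard result for reflexive spaces (Engel–Nagel, or Pazy, Corollary 1.10.6).
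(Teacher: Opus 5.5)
Your proof is correct. It uses the same ingredients as the paper's proof: pairing the mild formula with \(\phi\), the duality \(\langle e^{\mathcal A\sigma}g,\phi\rangle=\langle g,e^{\mathcal A^*\sigma}\phi\rangle\), and the fact that \(\sigma\mapsto e^{\mathcal A^*\sigma}\phi\) is \(C^1\) with derivative \(e^{\mathcal A^*\sigma}\mathcal A^*\phi\). The difference is in how you treat the convolution term.

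The paper differentiates \(t\mapsto\int_0^t\langle bR(s)\mathcal N n(s),e^{\mathcal A^*(t-s)}\phi\rangle\,ds\) directly by the Leibniz rule. This produces the boundary term \(bR(t)\langle\mathcal N n(t),\phi\rangle\) plus an integral, which it then recombines with the homogeneous term via the mild formula. You instead write \(e^{\mathcal A^*(t-s)}\phi-\phi\) as an integral, swap the order of integration on the triangle to get the integrated identity, and read off the derivative from the fundamental theorem of calculus.

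The paper's route is shorter, but it leaves implicit the Leibniz-rule hypothesis, namely joint continuity of \((s,t)\mapsto\langle f(s),e^{\mathcal A^*(t-s)}\mathcal A^*\phi\rangle\). Your route makes the interchange of limits explicit and gives the formula on all of \([0,T)\), one-sided at \(t=0\). It is also the weak-solution formulation in Ball's sense, which still works for merely integrable forcing; there you would get absolute continuity and the derivative formula almost everywhere.

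Your remark that nonnegativity plays no role is accurate. The paper's proof does not use it either, even though the lemma is stated for nonnegative mild solutions.
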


\begin{proof}
By the mild formula \eqref{eq:semi-mild-n} and the duality \(\langle e^{\mathcal{A}\sigma}g,\phi\rangle = \langle g, e^{\mathcal{A}^*\sigma}\phi\rangle\),
\[
\langle n(t),\phi\rangle = \langle n_0, e^{\mathcal{A}^*t}\phi\rangle + \int_0^t \langle bR(s)\,\mathcal{N}n(s), e^{\mathcal{A}^*(t-s)}\phi\rangle\,ds.
\]
Since \(\phi\in\mathcal{D}(\mathcal{A}^*)\), the map \(\sigma\mapsto e^{\mathcal{A}^*\sigma}\phi\) is continuously differentiable in \(L^2(\Omega)\) with derivative \(e^{\mathcal{A}^*\sigma}\mathcal{A}^*\phi\), and \(s\mapsto bR(s)\,\mathcal{N}n(s)\) is continuous in \(L^2(\Omega)\). Differentiating by the Leibniz rule gives
\[
\frac{d}{dt}\langle n(t),\phi\rangle = \langle n_0, e^{\mathcal{A}^*t}\mathcal{A}^*\phi\rangle + \langle bR(t)\,\mathcal{N}n(t),\phi\rangle + \int_0^t \langle bR(s)\,\mathcal{N}n(s), e^{\mathcal{A}^*(t-s)}\mathcal{A}^*\phi\rangle\,ds.
\]
Applying duality to the first and third terms and reassembling via the mild formula \eqref{eq:semi-mild-n} yields \(\langle n(t),\mathcal{A}^*\phi\rangle + bR(t)\langle\mathcal{N}n(t),\phi\rangle\), as claimed.
\end{proof}

\begin{thm}[Washout equilibrium]
\label{thm:washout}
Assume \(\theta/\eta < d/b\). Then \(\left(0,\frac{\theta}{\eta}\right)\) is the only nonnegative steady state of
\eqref{eq:semipde1}--\eqref{eq:semiinicon}. Moreover, for every nonnegative mild solution
\((n,R)\) of \eqref{eq:semipde1}--\eqref{eq:semiinicon},
\[
N(t) = \|n(\cdot,t)\|_{L^1(\Omega)}\longrightarrow 0
\qquad\text{and}\qquad
R(t)\longrightarrow \frac{\theta}{\eta}
\qquad\text{as }t\to\infty.
\]
The washout state is therefore globally asymptotically stable in the
\(L^1(\Omega)\times \mathbb R\) topology.
\end{thm}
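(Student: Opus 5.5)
We split the statement into the uniqueness of nonnegative steady states and the global convergence.

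\emph{Steady states.} Let \((\hat n,\hat R)\) be a nonnegative steady state. From \eqref{eq:ss_R}, \(\hat R=\theta/(\eta+b\int\chi_\alpha\hat n)\le\theta/\eta<d/b\). Hence \(s(\mathcal L_{\hat R})<0\) by Proposition~\ref{prop:spectral-sign-LR}, so \(0\notin\sigma(\mathcal L_{\hat R})\). Since \(\mathcal L_{\hat R}\hat n=0\), this forces \(\hat n=0\), and then \(\hat R=\theta/\eta\).

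\emph{Convergence.} By \eqref{eq:VoC-R} with \(h\ge0\), we have \(R(t)\le R_0e^{-\eta t}+\theta/\eta\). Fix \(R_*\in(\theta/\eta,d/b)\) and choose \(T_0\) with \(R(t)\le R_*\) for all \(t\ge T_0\). The key step is a comparison of \(n\) on \([T_0,\infty)\) with the linear semigroup generated by \(\mathcal L_{R_*}\). Rewrite the mild equation as
\[
n(t)=e^{[\mathcal L_{R_*}](t-T_0)}n(T_0)-\int_{T_0}^t e^{[\mathcal L_{R_*}](t-s)}\,b\bigl(R_*-R(s)\bigr)\mathcal N n(s)\,ds .
\]
This holds because mild solutions of \(\mathcal A+bR(t)\mathcal N\) coincide with mild solutions of \(\mathcal L_{R_*}\) with forcing \(b(R(s)-R_*)\mathcal N n(s)\), a standard bounded-perturbation identity. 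The integrand is nonnegative: \(R_*-R(s)\ge0\), \(\mathcal N n\ge0\) by Lemma~\ref{lem:N-bounded}, and \(e^{[\mathcal L_{R_*}]t}\) is positive by \eqref{eq:semigroup-domination}. Therefore
\[
0\le n(t)\le e^{[\mathcal L_{R_*}](t-T_0)}n(T_0).
\]
Corollary~\ref{cor:exp-stability-LR-below-threshold} then gives \(\|n(t)\|\le Me^{s(\mathcal L_{R_*})(t-T_0)}\|n(T_0)\|\to0\). Since \(\Omega\) has measure one, \(N(t)\le\|n(t)\|\to0\), so \(N_\alpha(t)\to0\). Lemma~\ref{lem:R-convergence} then yields \(R(t)\to\theta/\eta\).

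A variant that avoids the perturbation identity uses Lemma~\ref{lem:weak-diff-mild} with \(\phi=\psi_{R_*}\), the positive adjoint eigenfunction from Proposition~\ref{prop:spectral-sign-LR}(ii). This gives
\[
\tfrac{d}{dt}\langle n,\psi\rangle=s\langle n,\psi\rangle-b(R_*-R)\langle\mathcal Nn,\psi\rangle\le s\langle n,\psi\rangle ,
\]
so \(\langle n(t),\psi\rangle\) decays exponentially. Since \(\psi\in C^1(\bar\Omega)\) is strictly positive, it is bounded below by a positive constant, and \(N(t)\le C\langle n(t),\psi\rangle\to0\). I would actually use this variant, as it is cleaner.

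The main obstacle is justifying the comparison rigorously for mild rather than classical solutions. Lemma~\ref{lem:weak-diff-mild} resolves this in the duality approach. The remaining point to check is that \(\psi_{R_*}\), lying in \(H^2\subset C(\bar\Omega)\) and positive a.e., has a strictly positive minimum. If it only vanished at isolated points this would fail. In that case I would use the strong maximum principle for the adjoint equation \(m\psi''+v\psi'-d\chi_\alpha\psi+bR_*\mathcal N^*\psi=s\psi\) with Neumann conditions, together with Hopf's lemma at the boundary, to show \(\min\psi>0\).
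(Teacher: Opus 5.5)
Your proposal proves uniqueness of the nonnegative steady state and global attraction. The statement, however, also claims global asymptotic \emph{stability} in the $L^1(\Omega)\times\R$ topology, and you never address its Lyapunov-stability half. You must show that for every $\varepsilon>0$ there is $\delta>0$ such that $\|n_0\|_{L^1(\Omega)}+|R_0-\theta/\eta|<\delta$ implies $N(t)+|R(t)-\theta/\eta|<\varepsilon$ for \emph{all} $t\ge0$. Attraction does not imply this in general, and your estimates do not give it as they stand. Your $T_0$ depends on $R_0$. Your comparison bound $\|n(t)\|\le Me^{s(\mathcal L_{R_*})(t-T_0)}\|n(T_0)\|$ is controlled by an $L^2$ norm, while in the $L^1\times\R$ topology the initial density may be $L^1$-small but $L^2$-large. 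The paper treats this in a separate step of its proof.

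The fix uses only tools you already have. If $|R_0-\theta/\eta|<R_*-\theta/\eta$, then \eqref{eq:range-of-R(t)} gives $R(t)\le\max\{R_0,\theta/\eta\}\le R_*<d/b$ for all $t\ge0$, so $T_0=0$ works.
\begin{itemize}
\item The paper takes $\phi=\mathbf 1$ in Lemma~\ref{lem:weak-diff-mild} and uses \eqref{eq:adjoint-on-1} to get $\frac{d}{dt}N=(bR(t)-d)N_\alpha\le0$, hence $N(t)\le\|n_0\|_{L^1(\Omega)}$. Your $\psi$-inequality gives the same bound up to the constant $\|\psi\|_{L^\infty}/\min_{\overline\Omega}\psi$.
\item Since $0\le bR(s)N_\alpha(s)\le bR_*\sup_{\tau\ge0}N(\tau)$, formula \eqref{eq:VoC-R} bounds $|R(t)-\theta/\eta|$ by $|R_0-\theta/\eta|+\frac{bR_*}{\eta}\sup_{\tau\ge0}N(\tau)$.
\end{itemize}
These two bounds close the $\varepsilon$--$\delta$ argument.

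The rest of your proposal is correct.
\begin{itemize}
\item Your steady-state argument is a valid shortcut: $\hat R\le\theta/\eta<d/b$, so $s(\mathcal L_{\hat R})<0$ and $\mathcal L_{\hat R}$ is injective. The paper instead integrates \eqref{eq:ss_n} to get $(b\hat R-d)\hat N_\alpha=0$, excludes $\hat R=d/b$ via \eqref{eq:ss_R}, and then removes $\hat n$ on $(\alpha,1)$ by ODE uniqueness. That route is longer but avoids Proposition~\ref{prop:spectral-sign-LR}(iii)--(iv).
\item Your main convergence argument coincides with the paper's.
\item In the $\psi$-variant, the lower bound $\min_{\overline\Omega}\psi>0$ does hold, and in one dimension it is just Gronwall. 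At any zero $x_0$ of $\psi\ge0$ one also has $\psi'(x_0)=0$, by minimality in the interior and by the Neumann condition at the boundary. The nonlocal term $bR_*\mu\chi_\alpha\int_0^1p(y;x)\psi(y)\,dy$ is nonnegative, so $m\psi''+v\psi'\le C\psi$ a.e. Multiplying by $e^{\int v/m}$, integrating twice from $x_0$ and applying Gronwall forces $\psi\equiv0$, a contradiction.
\end{itemize}
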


\begin{proof}
We first show that the washout state is the only nonnegative steady state. Let \((\hat n,\hat R)\) be any nonnegative steady state solution and write \(\hat N_\alpha:=\int_0^1\chi_\alpha(x)\hat n(x)\,dx\). Integrating \eqref{eq:ss_n} over \(\Omega\) and using assumptions \(F_1\)--\(F_2\) and the boundary conditions \eqref{eq:semiinicon} gives \(0=(b\hat R-d)\hat N_\alpha\). Hence either \(\hat R = d/b\) or \(\hat N_\alpha = 0\). If \(\hat R = d/b\), the resource equation yields
\[
\hat N_\alpha = \frac{1}{d}\Bigl(\theta - \eta\,\frac{d}{b}\Bigr) = \frac{\eta}{d}\Bigl(\frac{\theta}{\eta} - \frac{d}{b}\Bigr) < 0,
\]
which is impossible since \(\hat n \ge 0\). Thus \(\hat N_\alpha = 0\), and hence \(\hat n \equiv 0\) on \((0,\alpha)\), and since \(\hat n\in \mathcal{D}(\mathcal A)\subset H^2(\Omega)\hookrightarrow C^1(\overline{\Omega})\), we have \(\hat n(\alpha)=0\). On \((\alpha,1)\) the steady state equation reduces to \(\partial_x(m\hat n' - v\hat n) = 0\), and hence
\[
m\hat n'(x) - v(x)\hat n(x) = C
\]
for some constant \(C\). Evaluating at \(x=1\) and using \(\hat n'(1)=0\) and \(v(1)=0\) gives \(C=0\), and therefore
\[
\hat n'(x) = \frac{v(x)}{m}\hat n(x) \qquad\text{on }(\alpha,1).
\]
Since \(v/m\) is continuous on \([\alpha,1]\), uniqueness for this first-order initial-value problem with \(\hat n(\alpha)=0\) gives \(\hat n \equiv 0\) on \([\alpha,1]\). Thus \(\hat n \equiv 0\) on \((0,1)\), and the resource equation gives \(\hat R = \theta/\eta\). Hence \((0, \theta/\eta)\) is the only nonnegative steady state.

We now prove global attraction. Let \((n,R)\) be any nonnegative mild solution of \eqref{eq:semipde1}--\eqref{eq:semiinicon}. By the resource equation,
\[
\dot R(t)=\theta-\eta R(t)-bR(t)N_\alpha(t) \le \theta-\eta R(t).
\]
Letting \(z\) solve \(\dot z = \theta - \eta z\) with \(z(0) = R(0)\), scalar comparison gives \(0\le R(t)\le z(t)\) for all \(t\ge 0\). Since \(z(t)\to \theta/\eta\) and \(\theta/\eta<d/b\), we may choose \(R_*\) with \(\theta/\eta<R_*<d/b\), so that there exists \(T_0>0\) with \(R(t)\le R_*\) for all \(t\ge T_0\).

By Proposition \ref{prop:spectral-sign-LR}, the map \(R\mapsto s(\mathcal{L}_R)\) is strictly increasing with \(s(\mathcal{L}_{d/b}) = 0\); since \(R_* < d/b\), it follows that \(s(\mathcal{L}_{R_*}) < 0\), and Corollary \ref{cor:exp-stability-LR-below-threshold} gives a constant \(M\ge 1\) such that
\[
\|e^{[\mathcal{L}_{R_*}]t}\|_{\mathcal L(L^2(\Omega))} \le Me^{s(\mathcal{L}_{R_*}) t}\qquad\text{for all }t\ge 0.
\]

Now fix \(t\ge T_0\). Since \(R(s)\le R_*\) for all \(s\ge T_0\), we may rewrite the equation \eqref{eq:semipde1} on \([T_0,\infty)\) as
\[
\partial_t n = \mathcal{L}_{R_*}n - b\bigl(R_*-R(t)\bigr)\mathcal N(n).
\]
Applying the variation-of-constants formula with generator \([\mathcal{L}_{R_*}]\) gives
\[
n(t)
=
e^{[\mathcal{L}_{R_*}](t-T_0)}n(T_0)
-
\int_{T_0}^{t}
e^{[\mathcal{L}_{R_*}](t-s)}
\,b\bigl(R_*-R(s)\bigr)\mathcal N(n(s))\,ds.
\]
Since \(n(s)\ge 0\), \(\mathcal N\) is positive, \(R_*-R(s)\ge 0\), and
\(e^{[\mathcal{L}_{R_*}]t}\) is a positive semigroup, the integral term is nonnegative.
Therefore,
\[
0\le n(t)\le e^{[\mathcal{L}_{R_*}](t-T_0)}n(T_0)
\qquad\text{for all }t\ge T_0.
\]
Taking \(L^2\) norms and using the semigroup estimate,
\[
\|n(\cdot,t)\|
\le
Me^{s(\mathcal{L}_{R_*})(t-T_0)}\|n(\cdot,T_0)\|
\longrightarrow 0
\qquad\text{as }t\to\infty.
\]
Since \(|\Omega|=1\), we have \(L^2(\Omega)\hookrightarrow L^1(\Omega)\) and hence
\[
N(t) = \|n(\cdot,t)\|_{L^1(\Omega)}
\le
\|n(\cdot,t)\|
\longrightarrow 0.
\]

It remains to show that \(R(t)\to \theta/\eta\). Since \(N_\alpha(t)\le N(t)\to 0\), Lemma~\ref{lem:R-convergence} yields \(R(t)\to \theta/\eta\) as \(t\to\infty\). Combining with \(N(t)\to 0\), we obtain
\begin{align}
\label{eq:N(t)-R(t)-to-0}
N(t)+\left|R(t)-\frac{\theta}{\eta}\right|\longrightarrow 0,
\end{align}
which is the claimed global attraction.

We now verify Lyapunov stability in the \(L^1(\Omega)\times\mathbb R\) topology. With \(R_*\) as above, set \(C:=1+bR_*/\eta\). By \eqref{eq:adjoint-on-1} and Lemma~\ref{lem:weak-diff-mild} with \(\phi=\mathbf 1\),
\begin{align}
\frac{d}{dt}N(t) &= \langle n(t),\mathcal{A}^*\mathbf{1}\rangle + bR(t)\langle \mathcal{N}n(t),\mathbf{1}\rangle \nonumber\\
&= \langle n(t),-d\,\chi_\alpha\rangle + bR(t)\langle n(t),\mathcal{N}^*\mathbf{1}\rangle \nonumber\\
&= -d\,N_\alpha(t) + bR(t)\,N_\alpha(t) \nonumber\\
&= (bR(t)-d)\,N_\alpha(t). \label{eq:dNdt}
\end{align}
Given \(\varepsilon>0\), choose \(\delta:=\min\bigl\{\varepsilon/(C+1),\,R_*-\theta/\eta\bigr\}\) and suppose \(\|n_0\|_{L^1(\Omega)}<\delta\) and \(|R_0-\theta/\eta|<\delta\). The second constraint gives \(R_0<\theta/\eta+\delta\le R_*\), and \eqref{eq:range-of-R(t)} yields \(R(t)\le\max\{R_0,\theta/\eta\}\le R_*<d/b\) for all \(t\ge 0\). Hence \(bR(t)-d<0\), and since \(N_\alpha(t)\ge 0\), \eqref{eq:dNdt} gives \(\frac{d}{dt}N(t)\le 0\), and therefore
\[
N(t) = \|n(\cdot,t)\|_{L^1(\Omega)}\le \|n_0\|_{L^1(\Omega)}\quad\text{for all }t\ge 0.
\]
Since \(0\le bR(t)N_\alpha(t)\le bR_*\|n_0\|_{L^1(\Omega)}\), subtracting \(\theta/\eta\) from both sides of \eqref{eq:VoC-R} and taking absolute values gives
\begin{align*}
\Bigl|R(t)-\frac{\theta}{\eta}\Bigr| &= \Bigl|\Bigl(R_0-\frac{\theta}{\eta}\Bigr)e^{-\eta t} - \int_0^t e^{-\eta(t-s)}bR(s)N_\alpha(s)\,ds\Bigr| \\
&\le \Bigl|R_0-\frac{\theta}{\eta}\Bigr|e^{-\eta t} + \int_0^t e^{-\eta(t-s)}bR(s)N_\alpha(s)\,ds \\
&\le \Bigl|R_0-\frac{\theta}{\eta}\Bigr| + bR_*\|n_0\|_{L^1(\Omega)}\int_0^t e^{-\eta(t-s)}\,ds \\
&\le \Bigl|R_0-\frac{\theta}{\eta}\Bigr| + \frac{bR_*}{\eta}\|n_0\|_{L^1(\Omega)}.
\end{align*}
Combining these two estimates,
\[
N(t)+\Bigl|R(t)-\frac{\theta}{\eta}\Bigr|\le (C+1)\delta\le\varepsilon\quad\text{for all }t\ge 0.
\]
Together with the global attraction established above, this proves global asymptotic stability of the washout equilibrium in \(L^1(\Omega)\times\mathbb R\).
\end{proof}

Theorem \ref{thm:washout} has shown that the system \eqref{eq:steadystate} converges to the washout equilibrium when \(\theta/\eta < d/b\). We now analyze the asymptotic behavior when the opposite inequality, \(\theta/\eta > d/b\), holds.

\begin{thm}[Positive equilibrium above threshold]
\label{thm:positive-equilibrium}
Assume \(\theta/\eta > d/b\). Then the steady state system \eqref{eq:steadystate} admits exactly two nonnegative equilibria:
\[
(\hat n,\hat R)=\left(0,\frac{\theta}{\eta}\right)
\qquad\text{and}\qquad
(\hat n,\hat R)=\left(\hat n^*,\frac{d}{b}\right),
\]
where \(\hat n^*>0\).
\end{thm}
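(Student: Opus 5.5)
The argument splits into classifying the possible equilibria and then constructing the positive one. Let \((\hat n,\hat R)\) be a nonnegative steady state and set \(\hat N_\alpha=\int_0^1\chi_\alpha\hat n\,dx\). As in the proof of Theorem~\ref{thm:washout}, we pair \eqref{eq:ss_n} with \(\mathbf 1\) and use \eqref{eq:adjoint-on-1}, which gives \((b\hat R-d)\hat N_\alpha=0\). This leaves two cases.

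\emph{Case \(\hat N_\alpha=0\).} The argument from Theorem~\ref{thm:washout} applies verbatim, since it never used the sign of \(\theta/\eta-d/b\). We get \(\hat n\equiv0\) on \((0,\alpha)\), hence \(\hat n(\alpha)=0\) by the \(C^1\) embedding. The first-order equation \(m\hat n'=v\hat n\) on \((\alpha,1)\) then forces \(\hat n\equiv0\). Finally \eqref{eq:ss_R} gives \(\hat R=\theta/\eta\), which is the washout state.

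\emph{Case \(\hat R=d/b\).} Now \(\hat n\) is a nonnegative element of \(\ker\mathcal L_{d/b}\). By Proposition~\ref{prop:spectral-sign-LR}(ii)--(iii), \(0=s(\mathcal L_{d/b})\) is a simple eigenvalue with strictly positive eigenfunction \(\varphi_{d/b}\). Hence \(\hat n=c\,\varphi_{d/b}\) for some \(c\ge0\). Since \(c=0\) would force \(\hat N_\alpha=0\) and lead back to the first case, a genuinely new steady state needs \(c>0\). Then \eqref{eq:ss_R} with \(\hat R=d/b\) determines \(c\) uniquely:
\[
c\int_0^1\chi_\alpha\varphi_{d/b}\,dx=\frac{\theta-\eta d/b}{d}=\frac{\eta}{d}\Bigl(\frac{\theta}{\eta}-\frac{d}{b}\Bigr)>0 .
\]
The denominator \(\int_0^1\chi_\alpha\varphi_{d/b}\,dx\) is strictly positive because \(\varphi_{d/b}>0\) a.e. and \(\alpha>0\). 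So \(c>0\) exists and is unique precisely because \(\theta/\eta>d/b\). Setting \(\hat n^*:=c\,\varphi_{d/b}>0\) gives the second equilibrium. Uniqueness of \(\hat n^*\) follows from the simplicity of the eigenvalue together with the scalar normalization.

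It remains to check that both pairs actually solve \eqref{eq:steadystate}. For the washout state this is immediate. For \((\hat n^*,d/b)\), equation \eqref{eq:ss_n} is \(\mathcal L_{d/b}\hat n^*=0\), which holds with \(\hat n^*\in\mathcal D(\mathcal A)\), and \eqref{eq:ss_R} holds by the choice of \(c\).

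The one delicate point is justifying that a nonnegative kernel element must be a multiple of \(\varphi_{d/b}\). Geometric simplicity from Proposition~\ref{prop:spectral-sign-LR}(ii) gives exactly \(\ker\mathcal L_{d/b}=\operatorname{span}\{\varphi_{d/b}\}\), so I would cite that directly. If the simplicity statement were unavailable, the fallback is to pair with the adjoint eigenfunction and use irreducibility. Everything else reuses computations already done in the proofs of Theorem~\ref{thm:washout} and Proposition~\ref{prop:spectral-sign-LR}.
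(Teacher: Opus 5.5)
Your proof is correct, but the step that classifies nontrivial steady states differs from the paper's. The paper splits on \(\hat n\equiv 0\) versus \(\hat n\not\equiv 0\). In the nontrivial case it pairs \(\mathcal L_{\hat R}\hat n=0\) with the strictly positive adjoint eigenfunction \(\psi_{\hat R}\) from Proposition~\ref{prop:spectral-sign-LR}(ii), which gives \(s(\mathcal L_{\hat R})\langle\hat n,\psi_{\hat R}\rangle=0\) and hence \(s(\mathcal L_{\hat R})=0\). It then uses the strict monotonicity in part (iv) to force \(\hat R=d/b\). The identity \((b\hat R-d)\hat N_\alpha=0\) is written down there but not actually needed in that branch.

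You instead let that identity drive the case split. You dispose of the branch \(\hat N_\alpha=0\) with the elementary argument from Theorem~\ref{thm:washout}: vanishing on \((0,\alpha)\), continuity at \(\alpha\), and integrating \(m\hat n'-v\hat n\) from \(x=1\), where \(\hat n'(1)=v(1)=0\). This is legitimate, since that argument never uses the sign of \(\theta/\eta-d/b\). Once \(\hat R=d/b\), the two proofs coincide: simplicity of \(0=s(\mathcal L_{d/b})\) gives \(\hat n=c\,\varphi_{d/b}\), and \eqref{eq:ss_R} fixes \(c\).

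Your route needs spectral information only at \(R=d/b\), namely parts (ii)--(iii). It does not need the monotonicity (iv) or principal adjoint eigenfunctions at arbitrary \(R\). You also explicitly check that \((\hat n^*,d/b)\) solves \eqref{eq:steadystate}, which the paper leaves implicit. The paper's route rests only on the Krein--Rutman fact that a nonnegative, nontrivial kernel element of \(\mathcal L_R\) forces \(s(\mathcal L_R)=0\). In exchange, it does not depend on being able to integrate the equation explicitly on the persister region \((\alpha,1)\).
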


\begin{proof}
The washout equilibrium \((0, \theta/\eta)\) is an immediate solution of \eqref{eq:steadystate}. Now let \((\hat n,\hat R)\) be any nonnegative steady state. Then \eqref{eq:ss_n} may be written as \(\mathcal L_{\hat R}\hat n = 0\). Taking the \(L^2\) inner product of \eqref{eq:ss_n} with the constant function \(\mathbf 1\), and using assumptions \(F_1\)--\(F_2\) and the boundary conditions \eqref{eq:semiinicon}, we obtain
\[
0 = (b\hat R - d)\hat N_\alpha.
\]
Hence either \(\hat N_\alpha = 0\) or \(\hat R = d/b\). If \(\hat n \equiv 0\), then \eqref{eq:ss_R} gives \(\hat R = \theta/\eta\), recovering the washout equilibrium.

Assume now that \(\hat n \not\equiv 0\). Since \(\mathcal L_{\hat R}\hat n = 0\), the strictly positive adjoint eigenfunction \(\psi_{\hat R}>0\) from Proposition~\ref{prop:spectral-sign-LR}(ii) yields
\[
0 = \langle\mathcal L_{\hat R}\hat n,\psi_{\hat R}\rangle = \langle\hat n,\mathcal L_{\hat R}^*\psi_{\hat R}\rangle = s(\mathcal L_{\hat R})\langle\hat n,\psi_{\hat R}\rangle.
\]
Since \(\hat n\ge0\), \(\hat n\not\equiv 0\), and \(\psi_{\hat R}>0\) a.e., we have \(\langle\hat n,\psi_{\hat R}\rangle>0\), and hence \(s(\mathcal L_{\hat R})=0\). By the strict monotonicity and sign characterization in Proposition~\ref{prop:spectral-sign-LR}, this forces \(\hat R = d/b\). Substituting into \eqref{eq:ss_R} gives
\[
\hat N_\alpha = \frac{\theta - \eta(d/b)}{d} > 0,
\]
and hence \(\hat n\) is nontrivial on the active region. At \(R = d/b\), Proposition~\ref{prop:spectral-sign-LR}(ii) shows that \(\ker(\mathcal L_{d/b}) = \operatorname{span}\{\varphi_{d/b}\}\) with \(\varphi_{d/b} > 0\), and therefore every nonnegative solution of \(\mathcal L_{d/b}\hat n = 0\) has the form \(\hat n = c\,\varphi_{d/b}\) for some \(c > 0\). The constant \(c\) is uniquely determined by \eqref{eq:ss_R}, namely
\[
c\int_0^1 \chi_\alpha(x)\varphi_{d/b}(x)\,dx = \frac{\theta-\eta(d/b)}{d}.
\]
Thus, under the condition \(\theta/\eta > d/b\), there exists a unique positive equilibrium \((\hat n^*, d/b)\) in addition to the washout state.
\end{proof}

When \(\theta/\eta > d/b\), Proposition~\ref{prop:spectral-sign-LR} gives \(s(\mathcal{L}_{\theta/\eta})>0\), suggesting that solutions should not converge to the washout state. The following theorem confirms this.

\begin{thm}[Weak persistence]
\label{thm:weak-persistence}
Assume \(\theta/\eta > d/b\), and let \((n,R)\) be a nonnegative mild solution of \eqref{eq:semipde1}--\eqref{eq:semiinicon} with \(n_0\not\equiv0\). Then
\[
\limsup_{t\to\infty} N(t)>0.
\]
That is, \(n\) is weakly persistent.
\end{thm}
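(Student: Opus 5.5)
We argue by contradiction. Suppose \(\limsup_{t\to\infty}N(t)=0\), so that \(N(t)\to 0\). Since \(0\le N_\alpha\le N\), also \(N_\alpha(t)\to0\), and Lemma~\ref{lem:R-convergence} gives \(R(t)\to\theta/\eta\). Because \(\theta/\eta>d/b\), we can fix \(R_*\) with \(d/b<R_*<\theta/\eta\). Then there is \(T_0\) such that \(R(t)\ge R_*\) for all \(t\ge T_0\). By Proposition~\ref{prop:spectral-sign-LR}, \(s_*:=s(\mathcal L_{R_*})>0\), and there is a strictly positive adjoint eigenfunction \(\psi_*\in\mathcal D(\mathcal L_{R_*}^*)=\mathcal D(\mathcal A^*)\subset C^1(\overline\Omega)\) with \(\mathcal L_{R_*}^*\psi_*=s_*\psi_*\).

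The plan is to use the adjoint functional \(W(t):=\langle n(t),\psi_*\rangle\) as a Lyapunov-type quantity. We apply Lemma~\ref{lem:weak-diff-mild} with \(\phi=\psi_*\), writing \(\mathcal A^*\psi_*=s_*\psi_*-bR_*\mathcal N^*\psi_*\). For \(t>T_0\) this gives
\[
\dot W(t)=s_*W(t)+b\bigl(R(t)-R_*\bigr)\langle \mathcal N n(t),\psi_*\rangle\ge s_*W(t).
\]
The inequality holds because \(n\ge0\), \(\mathcal N\) is positive (Lemma~\ref{lem:N-bounded}), and \(\psi_*>0\). Hence \(W(t)\ge W(T_0)e^{s_*(t-T_0)}\).

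It remains to show that \(W(T_0)>0\). Since \(n_0\ge0\) and \(n_0\not\equiv0\), the domination \eqref{eq:semigroup-domination} together with the mild formula \eqref{eq:semi-mild-n} gives \(n(T_0)\ge e^{\mathcal A T_0}n_0\); here the integral term is nonnegative because \(R\ge0\). The right-hand side is strictly positive a.e. by Corollary~\ref{cor:strict-positivity-A}, so \(W(T_0)>0\). Consequently \(W(t)\to\infty\).

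On the other hand, \(\psi_*\) is continuous on the compact set \(\overline\Omega\), so \(W(t)\le\|\psi_*\|_{L^\infty}N(t)\to0\). This contradicts \(W(t)\to\infty\), and therefore \(\limsup N(t)>0\).

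The main technical point is the differentiation step. Lemma~\ref{lem:weak-diff-mild} gives differentiability only for \(\phi\in\mathcal D(\mathcal A^*)\). We therefore need to confirm that \(\psi_*\) lies in this domain, which follows from Lemma~\ref{lem:adjoint-A-LR}, and that \(\psi_*\) is bounded, which the embedding into \(C^1(\overline\Omega)\) supplies. With these in hand, the comparison for \(W\) is a scalar Gronwall argument on \([T_0,\infty)\), and \(W\) is \(C^1\) there.
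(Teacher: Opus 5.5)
Your proof is correct and follows essentially the paper's argument: you take the positive adjoint eigenfunction at an intermediate resource level \(R\in(d/b,\theta/\eta)\), show that \(\langle n(t),\psi\rangle\) grows exponentially from a strictly positive value, and contradict the bound \(\langle n(t),\psi\rangle\le\|\psi\|_{L^\infty}N(t)\). The one difference is that the paper assumes \(\limsup N<\delta\) for a parameter-dependent \(\delta\) (with \(\theta/(\eta+b\delta)>d/b\)) and gets \(R(t)\ge R_-\) eventually by comparison with \(\dot z=\theta-(\eta+b\delta)z\), which gives the uniform bound \(\limsup_{t\to\infty}N(t)\ge\delta\) independent of the initial data; your hypothesis \(N\to0\) combined with Lemma~\ref{lem:R-convergence} gives exactly the stated, non-uniform conclusion.
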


\begin{proof}
We argue by contradiction. Choose \(\delta>0\) small enough that \(\theta/(\eta+b\delta)>d/b\), and suppose that \(\limsup_{t\to\infty}N(t)<\delta\). Pick \(R_-\) with \(d/b<R_-<\theta/(\eta+b\delta)\). Then there exists \(T_0>0\) such that \(N(t)\le\delta\) for all \(t\ge T_0\). Since \(N_\alpha(t)\le N(t)\le\delta\) for \(t\ge T_0\), the resource equation gives
\[
\dot R(t)=\theta-\eta R(t)-bR(t)N_\alpha(t)\ge\theta-(\eta+b\delta)R(t)\qquad\text{for all }t\ge T_0.
\]
Comparing with the scalar ODE \(\dot z=\theta-(\eta+b\delta)z\), whose solutions converge to \(\theta/(\eta+b\delta)\), we conclude that there exists \(t_1\ge T_0\) such that \(R(t)\ge R_-\) for all \(t\ge t_1\). By Proposition~\ref{prop:spectral-sign-LR}, \(s(\mathcal L_{R_-})>0\), and there exists \(\psi_-\in\mathcal{D}(\mathcal L_{R_-}^*)\) with \(\psi_->0\) satisfying
\[
\mathcal L_{R_-}^*\psi_-=s(\mathcal L_{R_-})\psi_-.
\]

Define \(F(t):=\langle n(\cdot,t),\psi_-\rangle\). We first justify that \(F(t_1)>0\). Since \(n_0\not\equiv 0\), the mild formula \eqref{eq:semi-mild-n} and the nonnegativity of the integrand give \(n(\cdot,t)\ge e^{\mathcal A t}n_0\) for all \(t>0\). Hence \(n(\cdot,t_1)>0\) a.e.\ on \(\Omega\) by Corollary~\ref{cor:strict-positivity-A}, and since \(\psi_->0\) a.e., it follows that \(F(t_1)>0\).

Since \(\psi_-\in\mathcal{D}(\mathcal{A}^*)\) by Lemma~\ref{lem:adjoint-A-LR}, Lemma~\ref{lem:weak-diff-mild} with \(\phi=\psi_-\) gives, for \(t\ge t_1\),
\begin{align*}
F'(t)
&= \langle n(\cdot,t),\mathcal{A}^*\psi_-\rangle + bR(t)\langle \mathcal N n(\cdot,t),\psi_-\rangle \\
&= \langle n(\cdot,t),\mathcal L_{R_-}^*\psi_- - bR_-\mathcal N^*\psi_-\rangle
+bR(t)\langle n(\cdot,t),\mathcal N^*\psi_-\rangle \\
&= \langle n(\cdot,t),\mathcal L_{R_-}^*\psi_-\rangle
+b(R(t)-R_-)\langle \mathcal N n(\cdot,t),\psi_-\rangle.
\end{align*}
Since \(R(t)\ge R_-\) for \(t\ge t_1\) and both \(\mathcal N\) and \(\psi_-\) are positive, the second term is nonnegative, and hence
\[
\frac{d}{dt}F(t) \ge s(\mathcal L_{R_-}) F(t) \qquad\text{for all }t \ge t_1.
\]
Gronwall's inequality yields
\begin{align}
\label{eq:lower-bound-F(t)}
F(t)\ge e^{s(\mathcal L_{R_-})(t-t_1)}F(t_1)\longrightarrow\infty\qquad\text{as }t\to\infty.
\end{align}

On the other hand, Lemma~\ref{lem:adjoint-A-LR} gives \(\psi_- \in \mathcal{D}(\mathcal L_{R_-}^*) \subset C^1(\overline{\Omega})\), and hence \(\psi_- \in L^\infty(\Omega)\). It follows that
\[
0 \le F(t) \le \|\psi_-\|_{L^\infty} N(t)\le\|\psi_-\|_{L^\infty}\,\delta\qquad\text{for all }t\ge T_0,
\]
which contradicts \eqref{eq:lower-bound-F(t)}. Hence \(\limsup_{t\to\infty} N(t) \ge \delta > 0\), and \(n\) is weakly persistent.
\end{proof}

\begin{rem}[Threshold independence]
\label{rem:threshold-independence}
The persistence threshold \(\theta/\eta = d/b\) depends only on the resource supply rate \(\theta\), the dilution rate \(\eta\), the death rate \(d\), and the growth rate coefficient \(b\). It is independent of all parameters governing the internal phenotypic dynamics: the phenotypic drift \(v\), the redistribution kernel \(p\), the switching probability \(\mu\), the persister cutoff \(\alpha\), and the diffusion coefficient \(m\). Biologically, this means that whether a bacterial population persists or goes extinct in the chemostat is determined entirely by the balance between nutrient supply and cell loss, not by the mechanism of phenotypic switching. The switching dynamics shape the phenotypic composition and the profile of the positive equilibrium \(\hat n^*\), but they cannot rescue a population from extinction when the resource supply is insufficient.
\end{rem}

This result shows that the population is weakly persistent, but the form of that persistence remains unknown. Based on the numerical results in the next section, we have the following conjecture. 

\begin{conjecture}
\label{conj:LinearLocalStability}
If \(\theta/\eta > d/b\), the unique positive steady state \((\hat n^*, d/b)\) from Theorem~\ref{thm:positive-equilibrium} is locally asymptotically stable.
\end{conjecture}

\section{Numerical Simulations} \label{sec:Numerics}

In this section, we simulate the model by approximating the PDE in equation \eqref{eq:semipde1} using a system of \(K\) ordinary differential equations. Code is available at \href{https://github.com/Tyler-Meadows/Persisters}{Tyler Meadows's GitHub repository}.

\begin{figure}[!t]
    \centering
    \begin{subfigure}[b]{0.48\linewidth}
        \centering
        \includegraphics[width=\linewidth]{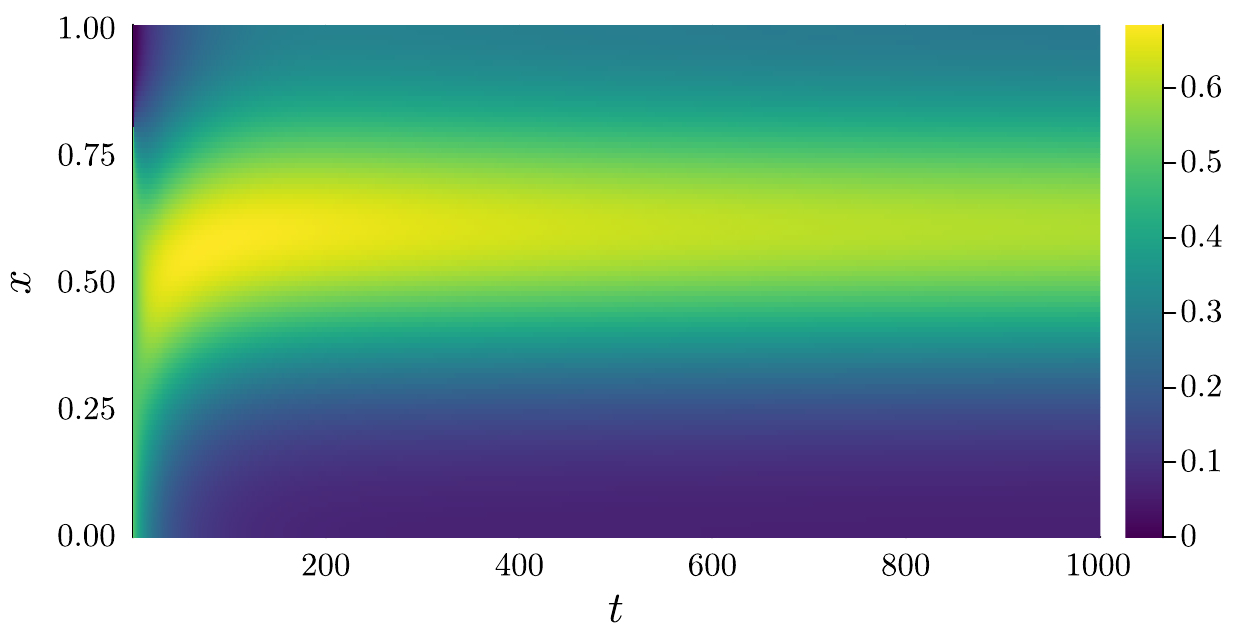}
        \label{fig:nopersisters}
    \end{subfigure}
    \hfill
    \begin{subfigure}[b]{0.48\linewidth}
        \centering
        \includegraphics[width=\linewidth]{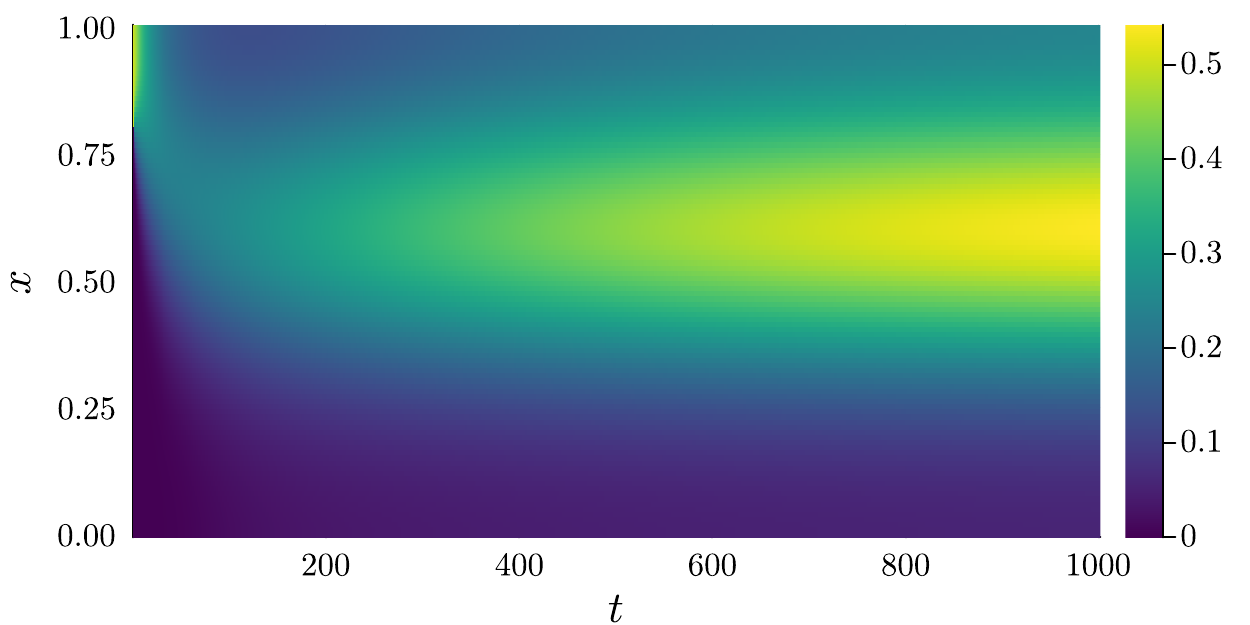}
        \label{fig:onlypersisters}
    \end{subfigure}
    \caption{Heatmaps showing the distribution of the persister phenotype $n(x,t)$ as a function of time under two initial conditions: (a) no persister cells ($n(x,0) = 0$ for $x > \alpha$); (b) only persister cells ($n(x,0) = 0$ for $x < \alpha$). Parameters used were $\theta = 1.0$, $\eta = 0.3$, $d = 0.03$, $b = 0.6$, $\mu = 0.4$, $\alpha = 0.80$, $\gamma = 0.6$, $m = 10^{-2}$, $v_0 = 1.0$.}
    \label{fig:persister_comparison}
\end{figure}

For simplicity, we assume that \(\alpha = k/K\) for some \(k, K \in \mathbb{N}\) with \(k < K\), so that no compartment contains both persister and regular cells. Each function may be approximated by its value at the midpoint of the compartment, denoted \(x_i\) for the \(i\)th compartment. Spatial derivatives are approximated by central finite differences,
\begin{align*}
\partial_x n(x_i) \approx \Delta n(x_i) &= \frac{n(x_{i+1})-n(x_{i-1})}{2/K},\\
\partial_{xx}n(x_i)\approx\Delta^{2}n(x_i) &= \frac{n(x_{i+1})-2n(x_i)+n(x_{i-1})}{(1/K)^{2}},
\end{align*}
with the Neumann boundary conditions imposed through ghost cells \(n_{0}:=n_{1}\) and \(n_{K+1}:=n_{K}\). The integral term is approximated by a sum,
\[
\int_0^1 p(x_i; y)\chi_\alpha(y)n(y)\,dy \approx \sum_{j=1}^{k} p(x_i; y_j) n_j \cdot \frac{1}{K}.
\]
Writing \(n(x_i, t) = n_i\), the finite-difference approximation of \eqref{eq:semipde1} reads
\begin{subequations}
    \begin{align}
        \partial_t n_i &= m \Delta^2 n_i - \Delta(v(x_i)n_i) + bR(1-\mu)n_i - dn_i + \mu b R \sum_{j=1}^k p(x_i; y_j)n_j \cdot \frac{1}{K} && i \le k, \\
        \partial_t n_i &= m \Delta^2 n_i - \Delta(v(x_i)n_i) + \mu b R \sum_{j=1}^k p(x_i; y_j)n_j \cdot \frac{1}{K} && i > k, \\
        \partial_t R &= \theta - \eta R - bR\sum_{i=1}^k n_i \cdot \frac{1}{K}.
    \end{align}
\end{subequations}

In our simulations we take \(p(x;y) \equiv 1\), so that daughter cells are uniformly distributed over all phenotypes. Figure~\ref{fig:persister_comparison} shows convergence to the positive steady state under two initial conditions: (a) no persister cells; (b) only persister cells. Time series simulations were done using the differential equations package in Julia \cite{rackauckas2017differentialequations}. These numerical results are consistent with Conjecture~\ref{conj:LinearLocalStability}; convergence from two markedly different initial conditions to the same steady state further suggests that global asymptotic stability may hold, a stronger conjecture we leave for future work.

\section*{Acknowledgments}

The authors thank Adrian~Lam and Thomas~Hillen for many fruitful discussions. T.~Day is supported by an NSERC Discovery Grant.  

\bibliographystyle{siamplain}
\bibliography{references}
\end{document}